\definecolor{my_color}{rgb}{0,0.5,0.5}
\definecolor{MIXT}{rgb}{0.4,0.3,0.6}
\numberwithin{equation}{section}
\font\tencyr=wncyr10 
\def\rus{\tencyr\cyracc}
\newtheorem{thm}{Theorem}[section]
\newtheorem{lm}[thm]{Lemma}
\newtheorem{cl}[thm]{Corollary}
\newtheorem{prop}[thm]{Proposition}
\theoremstyle{remark}
\newtheorem{rmk}[thm]{Remark}
\theoremstyle{definition}
\newtheorem{ex}[thm]{Example}
\newtheorem*{rema}{Remark}
\newenvironment{proof*}
{\noindent {\sl Proof.}\quad }{\hfill $\square$}
\newcommand {\ah}{{\mathfrak a}}
\newcommand {\be}{{\mathfrak b}}
\newcommand {\ce}{{\mathfrak c}}
\newcommand {\g}{{\mathfrak g}}
\newcommand {\gH}{{\eus H}}
\newcommand {\el}{{\mathfrak l}}
\newcommand {\n}{{\mathfrak n}}
\newcommand {\p}{{\mathfrak p}}
\newcommand {\te}{{\mathfrak t}}
\newcommand {\ut}{{\mathfrak u}}
\newcommand {\slno}{{\mathfrak{sl}}_{n+1}}
\newcommand {\spn}{{\mathfrak{sp}}_{2n}}
\newcommand {\esi}{\varepsilon}
\newcommand {\ap}{\alpha}
\newcommand {\lb}{\lambda}
\newcommand {\tap}{{\tilde{\alpha}}}
\newcommand {\HW}{\widehat W}
\newcommand {\HV}{\widehat V}
\newcommand {\HP}{\widehat\Pi}
\newcommand {\HD}{\widehat\Delta}
\newcommand {\cF}{{\mathcal F}}
\newcommand {\BR}{{\mathbb R}}
\newcommand {\BZ}{{\mathbb Z}}
\newcommand {\hot}{{\mathsf{ht}}}
\newcommand {\Ima}{{\mathsf{Im}}}
\newcommand {\rk}{{\mathsf{rk\,}}}
\newcommand {\tri}{\mathfrak{sl}_2}
\newcommand {\GR}[2]{{\textrm{{\bf #1}}}_{#2}}
\newcommand {\Ab}{\mathfrak{Ab}}
\newcommand {\Abo}{\mathfrak{Ab}^o}
\newcommand {\beq}{\begin{equation}}
\newcommand {\eeq}{\end{equation}}
\newcommand{\curge}{\succcurlyeq}
\newcommand{\curle}{\preccurlyeq}
\renewcommand{\le}{\leqslant}
\renewcommand{\ge}{\geqslant}
\newenvironment{E6}[6]{%
{\small\begin{tabular}{@{}c@{}}
{#1}--{#2}--\lower3.5ex\vbox{\hbox{{#3}\rule{0ex}{2.5ex}}
\hbox{\hspace{0.4ex}\rule{.1ex}{1ex}\rule{0ex}{1.4ex}}\hbox{{#6}\strut}}--{#4}--{#5}
\end{tabular}}}
\newcommand{\eus}{\EuScript}
\begin{document}
\setlength{\parskip}{2pt plus 4pt minus 0pt}
\hfill {\scriptsize December 24, 2015} 
\vskip1.5ex

\title[Normalisers of  abelian ideals]{Normalisers of abelian ideals of a Borel subalgebra and $\BZ$-gradings of a simple Lie algebra}
\author{Dmitri I. Panyushev}
\address[]{Institute for Information Transmission Problems of the R.A.S., Bolshoi Karetnyi per. 19,  
127051 Moscow,  Russia}
\email{panyushev@iitp.ru}
\keywords{Root system, Borel subalgebra, minuscule element, abelian ideal}
\subjclass[2010]{17B20, 17B22, 20F55}
\begin{abstract}

Let $\g$ be a simple Lie algebra  
and  $\Ab$ the poset of all abelian ideals of a fixed Borel subalgebra of $\g$. If $\ah\in\Ab$, then the normaliser of $\ah$ is a standard parabolic subalgebra of $\g$. 
We give an explicit description of the normaliser for a class of abelian ideals that includes all maximal abelian ideals. We also elaborate on a relationship between abelian ideals and $\BZ$-gradings of $\g$ associated with their normalisers.
\end{abstract}
\maketitle

\section*{Introduction}

\noindent
Let $\g$ be a complex simple Lie algebra with a  triangular decomposition 
$\g=\ut\oplus\te\oplus \ut^-$, where $\te$ is a fixed Cartan subalgebra and $\be=\ut\oplus\te$
is a fixed Borel subalgebra.  A subspace $\ah\subset\be$ is an {\it abelian ideal\/} if  $[\be,\ah]\subset \ah$ and 
$[\ah,\ah]=0$. Then $\ah\subset\ut$.
The general theory of abelian ideals of $\be$ is based on their relations with the so-called {\it minuscule 
elements\/} of the affine Weyl group $\HW$, which is due to D.~Peterson 
(see Kostant's account in~\cite{ko98}).  The subsequent development has lead to a number of spectacular results of 
combinatorial and representation-theoretic nature, see e.g.~\cite{cp1,cp2,cp3,ko04,imrn,jems,som05,suter}.

The normaliser of $\ah$ in $\g$, denoted $\n_\g(\ah)$, contains $\be$, i.e., it is a {\it standard\/} 
parabolic subalgebra of $\g$. In this note, we study the normalisers of abelian ideals using the 
corresponding minuscule elements of $\HW$ and $\BZ$-gradings of $\g$.
 
Let $\Delta$ be the root system of $(\g,\te)$, $\Delta^+$ the set of positive roots corresponding to $\ut$,  
$\Pi$ the set of simple roots in $\Delta^+$, and $\theta$  the highest root in  $\Delta^+$. Then $W$ is the Weyl group and $\g_\gamma$ is the root space for $\gamma\in\Delta$.
We write $\Ab=\Ab(\g)$ for the set of all abelian ideals of $\be$ and 
think of $\Ab$ as poset with respect to inclusion. 
Since $\ah\in\Ab$ is a sum of certain root spaces of $\ut$, we often identify such an $\ah$ 
with the corresponding subset $I=I_\ah$ of $\Delta^+$.

Let $\Abo$ denote the set of nonzero abelian ideals and $\Delta^+_l$  the set
of long positive roots.  
In~\cite[Sect.\,2]{imrn}, we defined a surjective mapping 
$\tau: \Abo \to \Delta^+_l$ and studied its fibres. 
If  $\tau(\ah)=\mu$, then $\mu\in\Delta^+_l$ is called the 
{\it rootlet\/} of $\ah$. 
Letting $\Ab_\mu=\tau^{-1}(\mu)$, we get a  partition of $\Abo$ parameterised by
$\Delta^+_l$. Each fibre $\Ab_\mu$ is  a sub-poset of $\Ab$.  By~\cite[Sect.\,3]{imrn}, the poset
$\Ab_\mu$ has a unique minimal and unique maximal element for any $\mu\in \Delta^+_l$.  These are denoted by $\ah(\mu)_{min}$ and $\ah(\mu)_{max}$, respectively. The corresponding sets of positive roots are 
$I(\mu)_{\min}$ and $I(\mu)_{\max}$. The abelian ideals of the form $\ah(\mu)_{min}$ (resp.
$\ah(\mu)_{max}$) will be referred to as the {\it root-minimal} (resp. {\it root-maximal}).
The set of globally maximal abelian ideals coincides with $\{\ah(\ap)_{max}\mid \ap\in\Pi_l\}$, where
$\Pi_l=\Delta^+_l\cap \Pi$~\cite[Cor.\,3.8]{imrn}. 

If $\p\supset\be$, then a Levi subalgebra $\el$ of $\p$ is said to be {\it standard}, if $\el\supset\te$.
Set $\p[\mu]_{min}=\n_\g(\ah(\mu)_{min})$ and  $\p[\mu]_{max}=\n_\g(\ah(\mu)_{max})$.
Write $\Pi[\mu]_{min}$ for the simple roots of the standard Levi subalgebra of $\p[\mu]_{min}$, and likewise for `max'.
Our main results  are the following:

{\bf I.} \ We explicitly describe $\Pi[\mu]_{min}$ for any root-minimal ideal $\ah(\mu)_{min}$. The answer is 
given in terms of the element $w_\mu\in W$ that takes $\theta$ to $\mu$ and has minimal possible length, 
see Theorem~\ref{thm:norm-mu-min}.  The elements $w_\mu$ have already been considered in~\cite{imrn}, 
and  we 
also provide here new properties of them. Furthermore, if $\theta$ is fundamental and $\ap_\theta\in\Pi$ is 
such that $(\theta,\ap_\theta)\ne 0$, then $\ap_\theta$ is long and we prove that 
$\Pi\setminus\Pi[\ap_\theta]_{min}$ consists of the simple roots that are adjacent to $\ap_\theta$ in the 
Dynkin diagram (Proposition~\ref{thm:norm-min}).

{\bf II.} \  
We give a new characterisation of normalisers of arbitrary $\be$-stable subspaces 
of $\ut$ (Theorem\,\ref{thm:new-normalise}) and then explicitly describe the normalisers of the globally maximal abelian ideals, i.e., we determine
$\Pi[\ap]_{max}$ for all $\ap\in \Pi_l$ (Theorem~\ref{thm:norm-max}). This is based on a relationship 
between $\ah(\ap)_{min}$ and $\ah(\ap)_{max}$ for $\ap\in\Pi_l$~\cite[Theorem\,4.7]{jems}, which allows us to retrieve information 
on $\Pi[\ap]_{max}$ from that on $\Pi[\ap]_{min}$.

{\bf III.} \  In Section~4, we relate $\ah\in\Ab(\g)$ to the $\BZ$-grading of $\g$ corresponding to $\n_\g(\ah)$.
Let $\mathfrak{Par}(\g)$ denote the set of 
all standard parabolic subalgebras of $\g$. By Peterson's theorem~\cite{ko98}, $\#\Ab(\g)=2^{\rk\g}$, hence the sets 
$\Ab(\g)$ and $\mathfrak{Par}(\g)$ are equipotent. There is the natural mapping $f_1:\Ab(\g)\to\mathfrak{Par}(\g)$ that takes $\ah$ to $\n_\g(\ah)$. By~\cite{pr}, $f_1$ is a bijection if and only if $\g=\slno$ or $\spn$.
Using the $\BZ$-grading associated with $\p\in\mathfrak{Par}(\g)$, we define here the natural mapping $f_2: \mathfrak{Par}(\g)\to \Ab(\g)$ and prove that $f_2$ is a bijection if and only if $\g=\slno$ or $\spn$; furthermore, $f_2=f_1^{-1}$ for these two series (Theorem~\ref{thm:svoistva-f2}).
We say that $\ah\in\Ab$ is {\it reflexive\/}, if $(f_2\circ f_1)(\ah)=\ah$. Then all abelian ideals for $\slno$ and $\spn$ are reflexive. We also prove that  $\ah(\ap)_{min}$ and $\ah(\ap)_{max}$ ($\ap\in\Pi_l$) are always reflexive and
characterise them in terms of the corresponding $\BZ$-gradings (see Theorem~\ref{thm:udivit1} and 
Remark~\ref{rmk:might-be}).
Finally, we conjecture that the sets $\Ima(f_1\circ f_2)$ and
$\Ima(f_2\circ f_1)$ are always equipotent and the maps $f_1$ and $f_2$ induce the mutually inverse bijections between them.




We refer to \cite{bour,hump} for standard results on root systems and (affine) Weyl groups.

{\small
{\bf Acknowledgements.} The research was carried out at the IITP RAS at the expense of the Russian
Foundation for Sciences (project {\rus N0} 14-50-00150).
}

\section{Preliminaries on minuscule elements and normalisers of abelian ideals}
\label{sect:odin}

\noindent
We equip  $\Delta^+$  with the usual partial ordering `$\curle$'.
This means that $\mu\curle\nu$ if $\nu-\mu$ is a non-negative integral linear combination
of simple roots. If $M$ is a subset of $\Delta^+$, then $\min(M)$ and $\max(M)$ are the minimal and maximal
elements of $M$ with respect to ``$\curle$''.

Any $\be$-stable subspace $\ce\subset \ut$ is a sum of certain root spaces in $\ut$,  i.e.,
$\ce=\bigoplus_{\gamma\in I_\ce}\g_\gamma$.  The relation $[\be,\ce]\subset \ce$ is equivalent to
that $I=I_\ce$ is an {\it upper ideal\/} of the poset $(\Delta^+, \curle)$, i.e., 
if $\nu\in I$, $\gamma\in\Delta^+$, and $\nu\curle \gamma$, then $\gamma\in I$.
We mostly work in the combinatorial setting, so that a $\be$-ideal $\ce\subset\ut$
is being identified with the corresponding upper ideal $I$ of $\Delta^+$. 
The property of being abelian additionally means that
$\gamma'+\gamma''\not\in \Delta^+$ for all $\gamma',\gamma''\in I$.

We recall below the notion of a minuscule element of  $\HW$ and their relation to abelian ideals.
We have $\Pi=\{\ap_1,\dots,\ap_n\}$, the vector space $\te_\BR=V=\oplus_{i=1}^n{\mathbb R}\ap_i$, 
the  Weyl group $W$ generated by  simple reflections
$s_1,\dots,s_n$,  and a $W$-invariant inner product $(\ ,\ )$ on $V$. 
Letting $\widehat V=V\oplus {\mathbb R}\delta\oplus {\mathbb R}\lb$, we extend
the inner product $(\ ,\ )$ on $\widehat V$ so that $(\delta,V)=(\lb,V)=
(\delta,\delta)= (\lb,\lb)=0$ and $(\delta,\lb)=1$. Set  $\ap_0=\delta-\theta$, where
$\theta$ is the highest root in $\Delta^+$. 
Then


\begin{itemize}
\item[] \ 
$\widehat\Delta=\{\Delta+k\delta \mid k\in {\mathbb Z}\}$ is the set of affine
(real) roots; 
\item[] \ $\HD^+= \Delta^+ \cup \{ \Delta +k\delta \mid k\ge 1\}$ is
the set of positive affine roots; 
\item[] \ $\HP=\Pi\cup\{\ap_0\}$ is the corresponding set
of affine simple roots;
\item[] \  $\mu^\vee=2\mu/(\mu,\mu)$ is the coroot corresponding to 
$\mu\in \widehat\Delta$.
\end{itemize}
 
\noindent
For each $\ap_i\in \HP$, let $s_i=s_{\ap_i}$ denote the corresponding reflection in $GL(\HV)$.
That is, $s_i(x)=x- (x,\ap_i)\ap_i^\vee$ for any $x\in \HV$.
The affine Weyl group, $\HW$, is the subgroup of $GL(\HV)$
generated by the reflections $s_0,s_1,\dots,s_n$.
The extended inner product $(\ ,\ )$ on $\widehat V$ is $\widehat W$-invariant. 
The {\it inversion set\/} of $w\in\HW$ is $\eus N(w)=\{\nu\in\HD^+\mid w(\nu)\in -\HD^+\}$. Note that if
$w\in W\subset \HW$, then $\eus N(w)\subset \Delta^+$.

Following Peterson, we say that $w\in \HW$ is  {\it minuscule\/}, if 
$\eus N(w)=\{-\gamma+\delta\mid \gamma\in I_w\}$ for some  $I_w\subset \Delta$.
One then proves that {\sf (i)} $I_w\subset \Delta^+$, {\sf (ii)} $I_w$ is (the set of roots of) an abelian ideal, and
{\sf (iii)} the assignment 
$w\mapsto I_w$ yields a bijection between the minuscule elements of
$\HW$ and the abelian ideals, see \cite{ko98},  \cite[Prop.\,2.8]{cp1}. 
Conversely, if $\ah\in\Ab$ and $I=I_\ah$, then $w_\ah\in\HW$ stands for the corresponding minuscule element.
Clearly, $\dim\ah=\# I_\ah=\#\eus N(w_\ah)$. 

Given $\ah\in\Abo$ and  $w_\ah\in\HW$, 
the {\it rootlet\/} of $\ah$ is defined by 
\[
   \tau(\ah)=
   w_\ah(\ap_0)+\delta=w_\ah(2\delta-\theta) .
\]
By \cite[Prop.\,2.5]{imrn}, we have $\tau(\ah)\in \Delta^+_l$ and every $\mu\in\Delta^+_l$ occurs in this way.

Let $\el$ be the standard Levi subalgebra of $\p=\n_\g(\ah)$ and $\Pi(\el)\subset\Pi$ the set of simple roots of $\el$. By~\cite[Theorem\,2.8]{norm}, the set  $\Pi(\el)$ is determined by $w_\ah$ as follows:
\[
    \ap\in \Pi(\el) \ \Longleftrightarrow \ w_{\ah}(\ap)\in\HP . 
\]
(Actually, this result of \cite{norm} has been proved for any $\be$-stable subspace $\ce\subset\ut$ in place of 
$\ah$. To this end, one also needs a more general theory of elements of $\HW$ associated with arbitrary
$\be$-stable subspaces of $\ut$~\cite{cp1}.)

An advantage of our situation is that,
for the root-minimal abelian ideals $\ah=\ah(\mu)_{min}$, there is a simple formula for $w_\ah$, which allows us to describe the corresponding normaliser in terms of $\mu$.
We also need the following facts: 

\textbullet\quad  $\#\tau^{-1}(\mu)=1$ (i.e., $\ah(\mu)_{min}=\ah(\mu)_{max}$) if and only if
$(\theta,\mu)\ne 0$~\cite[Theorem\,5.1]{imrn}.

\textbullet\quad   $\ah$ is root-minimal 
if and only if $I_\ah\subset
     \eus H:=\{\gamma\in \Delta^+ \mid (\gamma, \theta)\ne 0\}$~\cite[Theorem\,4.3]{imrn};

In what follows, it will be important to distinguish the cases whether $\theta$ is fundamental or not, and
whether $(\theta,\mu)=0$ or not. Recall that $\theta$ is fundamental if and only if $\Delta$ is not of type
$\GR{A}{n}$ or $\GR{C}{n}$. One also has $\#(\Pi\cap\gH)=\begin{cases} 2 & \text{ for } \GR{A}{n} \\
1 & \text{ for all other types } \end{cases}$ .  For the classical series, we use the standard notation and numbering for $\Pi$, which seems to be the same in all sources. For instance, for $\GR{A}{n}$, we have
$\ap_i=\esi_i-\esi_{i+1}$ ($i=1,\dots,n$), whence $\Pi\cap\gH=\{\ap_1,\ap_n\}$. For $\GR{E}{6}$, our numbering is \quad 
$\text{\begin{E6}{1}{2}{3}{4}{5}{6}\end{E6}}$; \ hence $\Pi\cap\gH=\{\ap_6\}$.

For  $\gamma\in\Delta$ and $\ap\in\Pi$, $[\gamma:\ap]$ stands for the coefficient of $\ap$ in the expression 
of $\gamma$ via $\Pi$.

\section{Normalisers of the root-minimal abelian ideals} 
\label{sect:norm-nu-min}

\noindent
In this section, we describe normalisers of the root-minimal abelian ideals for all  
$\mu\in\Delta^+_l$.

There is a unique element of minimal length in $W$ taking $\theta$ to $\mu$~\cite[Theorem\,4.1]{imrn}, which is denoted by $w_{\mu}$. The ideal $\ah(\mu)_{min}$ is completely determined by $w_{\mu}$.
Namely, $w_{\mu}s_0\in \HW$ is the minuscule element corresponding to $\ah(\mu)_{min}$~\cite[Theorem\,4.2]{imrn}.
We begin with two useful properties of the elements $w_{\mu}$.

\begin{lm}   \label{lm:vspomogat1}
If $\beta\in\Pi$ and $(\beta,\mu)=0$, then $w_{\mu}^{-1}(\beta)\in\Pi$ and 
$(w_{\mu}^{-1}(\beta),\theta)=0$.
\end{lm}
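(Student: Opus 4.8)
The plan is to prove the two conclusions separately: the orthogonality statement is immediate, while the assertion that $w_\mu^{-1}(\beta)$ is simple is the real content and will require the minimality of $w_\mu$. First I would record that, since $w_\mu(\theta)=\mu$, one has $w_\mu^{-1}(\mu)=\theta$, so by $W$-invariance of $(\ ,\ )$,
\[
(w_\mu^{-1}(\beta),\theta)=(w_\mu^{-1}(\beta),w_\mu^{-1}(\mu))=(\beta,\mu)=0 .
\]
This already gives the second assertion, and I would stress that it holds regardless of whether $w_\mu^{-1}(\beta)$ turns out to be simple. So it remains only to show $\gamma:=w_\mu^{-1}(\beta)\in\Pi$.

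Next I would set up the root subsystem orthogonal to $\theta$. Since $\theta$ is the highest root, it is dominant, so the stabiliser $W_\theta=\{w\in W : w(\theta)=\theta\}$ is the standard parabolic subgroup generated by $\Pi_\theta:=\{\ap\in\Pi : (\ap,\theta)=0\}$, with associated root subsystem $\Delta_\theta:=\Delta\cap\BZ\Pi_\theta$. I would verify the identity $\Delta_\theta=\{\gamma\in\Delta : (\gamma,\theta)=0\}$: if $\gamma=\sum_i c_i\ap_i\in\Delta^+$ with $(\gamma,\theta)=0$, then $0=\sum_i c_i(\ap_i,\theta)$ is a sum of non-negative terms, since dominance gives $(\ap_i,\theta)\ge 0$ and $c_i\ge 0$; hence $c_i>0$ forces $\ap_i\in\Pi_\theta$, so $\gamma\in\Delta_\theta^+$.

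The crux is simplicity, and here I would exploit that $\{w\in W: w(\theta)=\mu\}$ is exactly the left coset $w_\mu W_\theta$ (because each $s_\ap$ with $\ap\in\Pi_\theta$ fixes $\theta$), of which $w_\mu$ is the minimal-length representative. This yields two things: (a) $w_\mu(\Delta_\theta^+)\subseteq\Delta^+$, since a minimal left-coset representative of a standard parabolic maps the positive roots of that parabolic into $\Delta^+$ (it suffices to check this on the simple roots of $\Pi_\theta$, where it follows from minimality, and then extend by non-negativity of coordinates); and (b) $\gamma=w_\mu^{-1}(\beta)\in\Delta^+$, because $s_\beta(\mu)=\mu$ shows $s_\beta w_\mu$ lies in the same coset, so minimality forces $\ell(s_\beta w_\mu)>\ell(w_\mu)$, which is equivalent to $w_\mu^{-1}(\beta)\in\Delta^+$. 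Combining (b) with the first paragraph, $\gamma\in\Delta^+$ and $(\gamma,\theta)=0$, so $\gamma\in\Delta_\theta^+$ by the identity of the previous paragraph.

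Finally I would conclude simplicity by contradiction. If $\gamma\in\Delta_\theta^+$ were not simple, then within the subsystem $\Delta_\theta$ it would decompose as $\gamma=\gamma_1+\gamma_2$ with $\gamma_1,\gamma_2\in\Delta_\theta^+$; applying $w_\mu$ and invoking (a) gives $\beta=w_\mu(\gamma_1)+w_\mu(\gamma_2)$ as a sum of two elements of $\Delta^+$, contradicting that $\beta$ is a simple root. Hence $\gamma\in\Pi_\theta\subseteq\Pi$, completing the proof. The main obstacle is precisely this last step: positivity and orthogonality of $w_\mu^{-1}(\beta)$ are formal consequences of $W$-invariance and a single length computation, but deducing \emph{simplicity} genuinely needs the minimal-coset-representative property $w_\mu(\Delta_\theta^+)\subseteq\Delta^+$, which is where the minimality of $w_\mu$ enters in an essential way.
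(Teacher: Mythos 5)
Your proof is correct, and it takes a genuinely different route from the paper's. The paper obtains positivity of $w_\mu^{-1}(\beta)$ immediately from the inversion-set description $\eus N(w_\mu^{-1})=\{\gamma\in\Delta^+\mid (\gamma,\mu^\vee)=-1\}$ of \cite[Theorem~4.1(2)]{imrn}, and then kills a putative decomposition $w_\mu^{-1}(\beta)=\gamma_1+\gamma_2$ into positive roots by a direct computation: since $\beta$ is simple, some $w_\mu(\gamma_i)$ must be negative, the inversion-set description then yields $(\gamma_1,\theta^\vee)=1$, and $0=(\beta,\mu)=(\theta,\gamma_1+\gamma_2)$ forces $(\theta,\gamma_2)<0$, contradicting the dominance of $\theta$. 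You never invoke that inversion-set result. Instead you identify $\{w\in W\mid w(\theta)=\mu\}$ with the coset $w_\mu W_\theta$ of the standard parabolic subgroup $W_\theta$ stabilising the dominant root $\theta$, and you draw both key facts from $w_\mu$ being its minimal-length representative: $w_\mu^{-1}(\beta)\in\Delta^+$ (via $\ell(s_\beta w_\mu)>\ell(w_\mu)$) and $w_\mu(\Delta_\theta^+)\subset\Delta^+$; then, having located $w_\mu^{-1}(\beta)$ in $\Delta_\theta^+$, a decomposition inside that subsystem would exhibit the simple root $\beta$ as a sum of two positive roots, which is absurd. Both arguments hinge on the minimality of $w_\mu$, but they access it through different intermediaries: the paper reuses a nontrivial result about $\eus N(w_\mu^{-1})$ already established in \cite{imrn}, which makes its proof very short, whereas your argument is self-contained modulo standard Coxeter-theoretic facts (stabilisers of dominant weights, minimal coset representatives, simple roots of standard subsystems), and it makes the structural reason behind the lemma explicit: $w_\mu^{-1}$ carries the simple roots orthogonal to $\mu$ into $\Pi_\theta$ precisely because $w_\mu$ is the minimal representative of its $W_\theta$-coset.
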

\begin{proof}
It is known that $\eus N(w_{\mu}^{-1})=\{\gamma\in\Delta^+\mid (\gamma,\mu^\vee)=-1\}$~\cite[Theorem\,4.1(2)]{imrn}. 
Therefore $w_{\mu}^{-1}(\beta)\in\Delta^+$.  Assume that $w_{\mu}^{-1}(\beta)=\gamma_1+\gamma_2$ is a sum of positive roots. Then $\beta=w_\mu(\gamma_1)+w_\mu(\gamma_2)$. Without loss of generality, one may assume that $-\nu_1:=w_\mu(\gamma_1)$ is negative. Then 
$\nu_1\in \eus N(w_\mu^{-1})$, hence $(-\nu_1,\mu^\vee)=1$. Consequently,
$(\gamma_1,\theta^\vee)=1$. On the other hand, $0=(\mu,\beta)=(\theta,\gamma_1+\gamma_2)$ and therefore $(\theta, \gamma_2)<0$, which is impossible. Thus, $w_\mu^{-1}(\beta)$ must be simple and $(w_\mu^{-1}(\beta),\theta)=(\beta,\mu)=0$.
\end{proof}
\begin{lm} 
\label{lm:vspomogat2}
Suppose that $\theta$ is fundamental and $\ap_\theta\in\Pi$ is not orthogonal to $\theta$. If 
$(\theta,\mu)>0$ and $\theta\ne\mu$, then $w_{\mu}^{-1}(\theta)=\theta-\ap_\theta$; or, equivalently,
$w_\mu(\ap_\theta)=\mu-\theta$.
\end{lm}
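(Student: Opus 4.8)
The plan is to establish the first formula, $w_\mu^{-1}(\theta)=\theta-\ap_\theta$; the second is equivalent to it, since applying $w_\mu$ gives $\theta=w_\mu(\theta-\ap_\theta)=\mu-w_\mu(\ap_\theta)$. I would first record the numerical input coming from the hypotheses. Since $\theta$ and $\mu$ are both long with $(\theta,\mu)>0$ and $\theta\ne\mu$, one has $(\mu,\theta^\vee)=1$. Moreover, because $\theta$ is fundamental and $\ap_\theta$ is long, $(\ap_i,\theta^\vee)=\delta_{i,\ap_\theta}$, so for every root $\rho$ one has $[\rho:\ap_\theta]=(\rho,\theta^\vee)$; hence on positive roots this coefficient takes only the values $0,1,2$, the value $2$ occurring exactly for $\rho=\theta$. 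In particular $[\mu:\ap_\theta]=1$.

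The main idea is an induction on $\ell(w_\mu)$, peeling off a left descent of $w_\mu$. Recall that $\ap\in\Pi$ is a left descent of $w_\mu$ iff $w_\mu^{-1}(\ap)\in-\Delta^+$, i.e. $\ap\in\eus N(w_\mu^{-1})$; by the description $\eus N(w_\mu^{-1})=\{\gamma\in\Delta^+\mid(\gamma,\mu^\vee)=-1\}$ used in Lemma~\ref{lm:vspomogat1}, this means $(\ap,\mu^\vee)=-1$. For a left descent $\ap$, the element $s_\ap w_\mu$ has length $\ell(w_\mu)-1$ and sends $\theta$ to $\mu':=s_\ap\mu$; a short minimality argument (if some shorter $v$ took $\theta$ to $\mu'$, then $s_\ap v$ would beat $w_\mu$) shows it is the minimal-length element doing so, so $w_{\mu'}=s_\ap w_\mu$ and $w_\mu=s_\ap w_{\mu'}$. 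The point is that if the chosen $\ap$ is orthogonal to $\theta$, then $s_\ap$ fixes $\theta$, whence
\[
  w_\mu^{-1}(\theta)=w_{\mu'}^{-1}s_\ap(\theta)=w_{\mu'}^{-1}(\theta),
\]
so the statement for $\mu$ follows from that for $\mu'$. For the base case $\ell(w_\mu)=1$ we have $w_\mu=s_\ap$ with $s_\ap\theta=\mu\ne\theta$, forcing $(\theta,\ap^\vee)>0$; thus $\ap=\ap_\theta$ and $\mu=\theta-\ap_\theta$, and $w_\mu^{-1}(\theta)=s_{\ap_\theta}(\theta)=\theta-\ap_\theta$, as wanted.

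The crux — and the step I expect to be the main obstacle — is to guarantee, when $\ell(w_\mu)\ge 2$, the existence of a left descent $\ap\ne\ap_\theta$. For such an $\ap$ one has $\ap\perp\theta$ (the only simple root in $\gH$ is $\ap_\theta$, since $\theta$ is fundamental), and one also checks $(\theta,\mu')=(\theta,\mu)>0$ and $\mu'\ne\theta$ (as $\ell(w_{\mu'})=\ell(w_\mu)-1\ge 1$), so the induction hypothesis applies to $\mu'$. I would argue that $\ap_\theta$ cannot be a left descent once $\ell(w_\mu)\ge 2$: if it were, then $(\mu,\ap_\theta^\vee)=(\ap_\theta,\mu^\vee)=-1$, so $\mu'=s_{\ap_\theta}\mu=\mu+\ap_\theta$ and $[\mu':\ap_\theta]=2$; by the numerical fact above this forces $\mu'=\theta$, i.e. $\mu=\theta-\ap_\theta$ and $w_\mu=s_{\ap_\theta}$ has length $1$, a contradiction. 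Since $w_\mu\ne e$ has at least one left descent and none of them equals $\ap_\theta$, a suitable $\ap$ exists and the induction closes. The only real delicacy is this control of which simple reflections may appear as descents, and it is precisely here that the hypotheses ``$\theta$ fundamental'' and ``$\ap_\theta$ long'' enter, via the identity $[\rho:\ap_\theta]=(\rho,\theta^\vee)$ and the characterisation of $\theta$ as the unique positive root with $\ap_\theta$-coefficient $2$.
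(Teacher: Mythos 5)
Your proof is correct, but it takes a genuinely different route from the paper's. The paper leans on machinery from \cite{mics}: every reduced expression of $w_\mu$ contains each simple reflection with the same multiplicity, determined by the coefficients of $\theta-\mu$ (so $s_{\ap_\theta}$ occurs exactly once, as $[\theta-\mu:\ap_\theta]=1$ and $\ap_\theta$ is long), and reduced expressions are in bijection with root paths from $\theta$ to $\mu$ inside $\Delta^+_l$; since $\theta$ is fundamental, the only step down from $\theta$ is $\theta\leadsto s_{\ap_\theta}(\theta)$, so $w_\mu=w's_{\ap_\theta}$ with $w'$ containing no factor $s_{\ap_\theta}$, hence $w'$ fixes $\theta$ and $w_\mu^{-1}(\theta)=s_{\ap_\theta}(\theta)=\theta-\ap_\theta$. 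You instead run a self-contained induction on $\ell(w_\mu)$: using only the inversion-set description $\eus N(w_\mu^{-1})=\{\gamma\in\Delta^+\mid (\gamma,\mu^\vee)=-1\}$ (the same input from \cite[Theorem\,4.1]{imrn} that the paper already uses in Lemma~\ref{lm:vspomogat1}), you peel off left descents, observe that any descent other than $\ap_\theta$ is orthogonal to $\theta$ and hence does not change $w_\mu^{-1}(\theta)$, and rule out $\ap_\theta$ as a descent when $\ell(w_\mu)\ge 2$ via the identity $[\rho:\ap_\theta]=(\rho,\theta^\vee)$ together with the characterisation of $\theta$ as the unique positive root with $(\rho,\theta^\vee)=2$. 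In substance both arguments land on the same normal form --- $w_\mu$ is a product of reflections orthogonal to $\theta$ times a single $s_{\ap_\theta}$ applied first --- but yours derives it elementarily, at the cost of the bookkeeping in the induction step (one should also record that $\mu'=s_\ap\mu$ is again a \emph{positive} long root, which is immediate since $\mu'=\mu+c\ap$ with $c>0$ and $\mu\ne\ap$), whereas the paper's appeal to \cite{mics} buys a shorter proof and strictly stronger structural information, namely control of \emph{all} reduced expressions of $w_\mu$.
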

\begin{proof}  It is well known and easily verified that $\ap_\theta$ is long and $[\theta:\ap_\theta]=2$ (cf. 
also Theorem~\ref{thm:tap-explicit}(ii)). If $\mu\in\gH\setminus\{\theta\}$, then $[\mu:\ap_\theta]=1$.
By \cite[Section\,1]{mics}, multiplicities of the simple reflections in any reduced 
expression of $w_\mu$ are the same, and they are determined by the coefficients of $\theta-\mu$. 
In particular,
$s_{\ap_\theta}$ occurs only once, since $[\theta-\mu:\ap_\theta]=1$ and $\ap_\theta$ is long.
Moreover, the reduced expressions of $w_\mu$ are in a bijections with the ``root paths'' connecting
$\theta$ with $\mu$ inside $\Delta^+_l$. Since $\theta$ is fundamental, the passage $\theta \leadsto
s_{\theta_\ap}(\theta)$ is the only step down from $\theta$ inside $\Delta^+_l$. Hence any root path leading to
$\mu$ starts with this step. Therefore, every
reduced expression of $w_\mu$ begins with $s_{\ap_\theta}$, and one can write
$w_\mu=w's_{\ap_\theta}$, where $w'$ does not contain factors $s_{\ap_\theta}$. Therefore,
$w_{\mu}^{-1}(\theta)=s_{\ap_\theta}w'^{-1}(\theta)=s_{\ap_\theta}(\theta)=\theta-\ap_\theta$.
\end{proof}

\begin{rema} This is a generalisation of \cite[Lemma\,4.3]{mics}, where the similar assertion is proved for $\mu=\ap_\theta$.
\end{rema}
Recall that $\Pi[\mu]_{min}\subset \Pi$ is the set of simple roots for the standard Levi subalgebra of
$\p[\mu]_{min}$. Since $\theta$ is not fundamental if and only if $\Delta=\GR{A}{n}$ or $\GR{C}{n}$, 
the following result covers all the possibilities for $\mu$.  

\begin{thm}   \label{thm:norm-mu-min}
For any $\mu\in \Delta^+_l$, the set\/ $\Pi[\mu]_{min}$ has the following description.
\begin{itemize}
\item[\sf (i)] \ 
$\Pi[\mu]_{min}\cap \theta^\perp= \{ w_\mu^{-1}(\beta) \mid \beta\in \Pi \ \& \ (\beta,\mu)=0\}
=\{\ap\in\Pi\mid w_\mu(\ap)\in\Pi \ \& \ (\ap,\theta)=0\}$.
\item[\sf (ii)] \ If $(\mu,\theta)= 0$, then
$\Pi[\mu]_{min}=\{ w_\mu^{-1}(\beta) \mid \beta\in \Pi \ \& \ (\beta,\mu)=0\}$. In particular,
$\Pi[\mu]_{min}\subset \theta^\perp$.
\item[\sf (iii)] \  Suppose that  $(\mu,\theta)\ne 0$ (i.e., $\mu\in\gH$) and $\mu\ne\theta$.
\begin{itemize}
\item[a)] \ if $\theta$ is fundamental, then 
$\Pi[\mu]_{min}=\{\ap_\theta\}\cup \{ w_\mu^{-1}(\beta) \mid \beta\in \Pi \ \& \ (\beta,\mu)=0\}$, where
$\ap_\theta$ is the only simple root such that $(\theta,\ap_\theta)\ne 0$;
\item[b)] \ if $\Delta=\GR{C}{n}$, then there is no such long roots $\mu$;
\item[c)] \ if $\Delta=\GR{A}{n}$ and $\mu=\ap_1+\dots +\ap_i=\gamma_i$ ($i<n$) or 
$\ap_j+\dots +\ap_n=\tilde\gamma_j$ ($j>1$), then \\ \indent
$\Pi[{\gamma_i}]_{min}=\{\ap_n\}\cup  \{ w_{\gamma_i}^{-1}(\beta) \mid \beta\in \Pi \ \& \ 
(\beta,\gamma_i)=0\}
=\Pi\setminus \{\ap_1,\ap_i\}$
and 
\\ \indent 
$\Pi[\tilde\gamma_j]_{min}=\{\ap_1\}\cup \{w_{\tilde\gamma_j}^{-1}(\beta) \mid \beta\in \Pi \ \& \ (\beta,\tilde\gamma_j)=0\}
=\Pi\setminus \{\ap_j,\ap_n\}$.
\end{itemize}
\item[\sf (iv)] \  If $\mu=\theta$, then $\Pi[\theta]_{min}=\{\beta\in\Pi\mid (\beta,\theta)=0\}$.
\end{itemize}
\end{thm}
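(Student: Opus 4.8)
The engine of the proof is the criterion $\ap\in\Pi(\el)\Longleftrightarrow w_\ah(\ap)\in\HP$ recalled above, applied to $\ah=\ah(\mu)_{min}$, whose minuscule element is $w_\ah=w_\mu s_0$. Thus the whole theorem reduces to deciding, for each $\ap\in\Pi$, whether $w_\mu s_0(\ap)\in\HP=\Pi\cup\{\ap_0\}$. The plan is to first make $s_0$ explicit. Since $\ap_0=\delta-\theta$, a one-line computation starting from $s_0(\ap)=\ap-(\ap,\ap_0^\vee)\ap_0$ gives $s_0(\ap)=s_\theta(\ap)+(\ap,\theta^\vee)\delta$; applying $w_\mu$, which fixes $\delta$ and sends $\theta$ to $\mu$, then yields
\[
   w_\mu s_0(\ap)=w_\mu s_\theta(\ap)+(\ap,\theta^\vee)\delta ,
\]
with $w_\mu s_\theta(\ap)\in\Delta$.

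Everything is then governed by the integer $(\ap,\theta^\vee)$, which for $\ap\in\Pi$ with $\ap\ne\theta$ equals $0$ or $1$ (because $\ap+\theta\notin\Delta$ and $\ap-2\theta\notin\Delta$, $\theta$ being the highest root). This splits the argument into two families. If $(\ap,\theta)=0$, then $(\ap,\theta^\vee)=0$ and $w_\mu s_0(\ap)=w_\mu(\ap)\in\Delta$, so the condition $w_\mu s_0(\ap)\in\HP$ becomes $w_\mu(\ap)\in\Pi$; together with $(\ap,\theta)=0$ this is exactly the third description in {\sf (i)}. Lemma~\ref{lm:vspomogat1} (which guarantees $w_\mu^{-1}(\beta)\in\Pi$ and $w_\mu^{-1}(\beta)\in\theta^\perp$ for $\beta\in\Pi$, $(\beta,\mu)=0$) combined with the orthogonality of $w_\mu$ then identifies this set with $\{w_\mu^{-1}(\beta)\mid\beta\in\Pi,\ (\beta,\mu)=0\}$, which proves {\sf (i)}. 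If instead $(\ap,\theta)\ne 0$, then $(\ap,\theta^\vee)=1$, $s_\theta(\ap)=\ap-\theta$, and $w_\mu s_0(\ap)=w_\mu(\ap)-\mu+\delta$ lies in $\HP$ iff it equals $\ap_0$, i.e. iff $w_\mu^{-1}(\theta)=\theta-\ap$. As a simple root is determined by $\theta-\ap$, at most one such root can occur.

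It then remains to identify this single possible ``extra'' root in each case. If $(\mu,\theta)=0$, then $(w_\mu^{-1}(\theta),\theta)=(\theta,\mu)=0$, whereas $(\theta-\ap,\theta)=\tfrac12(\theta,\theta)\ne 0$; hence no root with $(\ap,\theta)\ne 0$ qualifies, so $\Pi[\mu]_{min}\subset\theta^\perp$ and {\sf (ii)} follows from {\sf (i)}. If $\theta$ is fundamental and $\mu\in\gH\setminus\{\theta\}$, Lemma~\ref{lm:vspomogat2} gives precisely $w_\mu^{-1}(\theta)=\theta-\ap_\theta$, and as $\ap_\theta$ is the unique simple root not orthogonal to $\theta$, the extra root is exactly $\ap_\theta$; this is {\sf (iii)}(a). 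If $\mu=\theta$, then $w_\mu=e$ and $w_\mu^{-1}(\theta)=\theta-\ap$ forces $\ap=0$, so no extra root arises and $\Pi[\theta]_{min}=\{\beta\in\Pi\mid(\beta,\theta)=0\}$, which is {\sf (iv)}.

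Finally I would treat the non-fundamental types in {\sf (iii)}, namely $\GR{A}{n}$ and $\GR{C}{n}$. For $\GR{C}{n}$ one has $\theta=2\esi_1$, and the only long positive root not orthogonal to $\theta$ is $\theta$ itself, so no admissible $\mu$ exists and {\sf (iii)}(b) is vacuous. For $\GR{A}{n}$ every root is long and $\Pi\cap\gH=\{\ap_1,\ap_n\}$; here I would realise $w_\mu$ explicitly as a short cycle in $W=S_{n+1}$ (for $\mu=\gamma_i$, the element fixing $\esi_1$ and sending $\esi_{n+1}\mapsto\esi_{i+1}$, and dually for $\mu=\tilde\gamma_j$), check that $w_{\gamma_i}^{-1}(\theta)=\theta-\ap_n$ and $w_{\tilde\gamma_j}^{-1}(\theta)=\theta-\ap_1$, and so read off the extra root $\ap_n$ (resp. $\ap_1$). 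The closed forms $\Pi\setminus\{\ap_1,\ap_i\}$ and $\Pi\setminus\{\ap_j,\ap_n\}$ then drop out of a direct evaluation of $\{w_\mu^{-1}(\beta)\mid(\beta,\mu)=0\}$ in the $\esi$-coordinates. The main obstacle throughout is precisely this off-diagonal case $(\ap,\theta)\ne 0$: the part of $\Pi[\mu]_{min}$ inside $\theta^\perp$ is almost formal once Lemma~\ref{lm:vspomogat1} is in hand, but pinning down the unique extra root needs genuine input — Lemma~\ref{lm:vspomogat2} in the fundamental case and the explicit permutation computation in type $\GR{A}{n}$ — together with the bookkeeping that adjoining it to the orthogonal part reproduces the stated closed-form sets.
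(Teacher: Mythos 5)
Your proposal is correct and follows essentially the same route as the paper: both reduce the theorem to the criterion $\ap\in\Pi[\mu]_{min}\Leftrightarrow w_\mu s_0(\ap)\in\HP$, split into the two cases $w_\mu s_0(\ap)\in\Pi$ (equivalently $(\ap,\theta)=0$, handled via Lemma~\ref{lm:vspomogat1}) and $w_\mu s_0(\ap)=\ap_0$ (equivalently $w_\mu^{-1}(\theta)=\theta-\ap$, handled via Lemma~\ref{lm:vspomogat2} when $\theta$ is fundamental), and dispose of type $\GR{A}{n}$ by direct computation. Your only cosmetic difference is that you make the $\delta$-coefficient of $w_\mu s_0(\ap)$ explicit up front, whereas the paper deduces $(\ap,\theta)=0$ versus $(\ap,\theta)\ne 0$ from which element of $\HP$ is hit; the substance is identical.
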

\begin{proof}
Since $w_{\mu} s_0\in\HW$ is the minuscule element corresponding to $I(\mu)_{min}$, the general theory of normalisers of  $\be$-stable subspaces of $\ut$ asserts that
\beq    \label{eq:Levi-norm}
  \ap\in \Pi[\mu]_{min} \ \Longleftrightarrow \ w_{\mu}s_0(\ap)\in\HP ,
\eeq
see \cite[Theorem\,2.8]{norm}. Here one has to distinguish two possibilities:

{\it\bfseries (1)}  \  $w_{\mu}s_0(\ap)\in \Pi$;

{\it\bfseries (2)} \  $w_{\mu}s_0(\ap)=\ap_0=\delta-\theta$.
\\[.7ex]
\textbullet \quad Suppose that $w_{\mu}s_0(\ap)=\beta\in\Pi$. Then $w_{\mu}^{-1}(\beta)=s_0(\ap)\in \Delta$. Hence $s_0(\ap)=\ap$ and therefore $(\theta,\ap)=0$ and
$(\beta,\mu)=(w_{\mu}(\ap),w_{\mu}(\theta))=0$. Thus, 
if  $\ap\in \Pi[\mu]_{min}$ satisfies~{\it\bfseries (1)}, then $w_{\mu}(\ap)=\beta\in\Pi$ and
$(\beta,\mu)=(\theta,\ap)=0$.

Conversely, if $\beta\in\Pi$ and $(\beta,\mu)=0$, then Lemma~\ref{lm:vspomogat1} shows that 
$\ap:=w_{\mu}^{-1}(\beta)\in\Pi$ and $(\ap,\theta)=0$. Hence {\it\bfseries (1)} is satisfied for
$\mu$ and $\ap$.
\\[.7ex]
\textbullet \quad Suppose that $w_{\mu}s_0(\ap)=\ap_0=\delta-\theta$. 
Then $w_{\mu}^{-1}(\delta-\theta)=s_0(\ap)$. Therefore, $\ap\in\Pi_l$ and $s_0(\ap)\ne\ap$, i.e.,
$(\ap,\theta)\ne 0$. More precisely, $\delta-w_{\mu}^{-1}(\theta)=\delta-(\theta-\ap)$, hence 
$w_{\mu}^{-1}(\theta)=\theta-\ap$. The last equality can be rewritten as $\theta=\mu-w_{\mu}(\ap)$.
Therefore, $(\mu,\theta)\ne 0$ and $\mu\ne\theta$. Hence equality~{\it\bfseries (2)} can only occur 
for $\mu\in\gH\setminus \{\theta\}$ and $\ap\in\gH$. Furthermore, 
if $\theta$ is fundamental, then one must have $\ap=\ap_\theta$. 
By Lemma~\ref{lm:vspomogat2}, the equality
$w_{\mu}^{-1}(\theta)=\theta-\ap_\theta$ is then satisfied and we conclude that $\ap_\theta\in \Pi[\mu]_{min}$.
\\ \indent 
This proves parts~(i),(ii),(iiia). 

Parts (iiib) is clear, and (iiic) is obtained by a direct calculation.

(iv) \ Here $\ah(\theta)_{min}=\g_\theta$, and the assertion is obvious.
\end{proof}

\noindent
Theorem~\ref{thm:norm-mu-min} provides a complete description of $\Pi[\mu]_{min}$ for all 
$\mu\in \Delta^+_l$. But for some long simple roots, the assertion can be made even more precise.

\begin{prop}   \label{thm:norm-min}
If $\theta$ is fundamental and $(\theta,\ap_\theta)\ne 0$, then 
$\Pi[\ap_\theta]_{min}=\{\ap_\theta\}\cup \{ \beta\in \Pi \mid  (\beta,\ap_\theta)=0\}$. 
Therefore, $\Pi\setminus \Pi[\ap_\theta]_{min}$ consists of the simple roots that are adjacent to 
$\ap_\theta$ in the Dynkin diagram.
\end{prop}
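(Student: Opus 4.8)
The plan is to specialise Theorem~\ref{thm:norm-mu-min}(iiia) to the case $\mu=\ap_\theta$ and then simplify the resulting indexing set. First I would verify that the hypotheses of part~(iiia) are met. Since $\theta$ is fundamental, $\ap_\theta$ is long with $[\theta:\ap_\theta]=2$, so in particular $\ap_\theta\ne\theta$; and because $\theta$ is dominant, the assumption $(\theta,\ap_\theta)\ne 0$ actually gives $(\theta,\ap_\theta)>0$, so that $\ap_\theta\in\gH\setminus\{\theta\}$. Theorem~\ref{thm:norm-mu-min}(iiia) then applies verbatim and yields
\[
  \Pi[\ap_\theta]_{min}=\{\ap_\theta\}\cup\{w_{\ap_\theta}^{-1}(\beta)\mid \beta\in\Pi,\ (\beta,\ap_\theta)=0\}.
\]
It therefore remains only to identify the second set with $\{\beta\in\Pi\mid(\beta,\ap_\theta)=0\}$.

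The heart of the argument is to show that the (injective) map $\beta\mapsto w_{\ap_\theta}^{-1}(\beta)$ sends $\{\beta\in\Pi\mid(\beta,\ap_\theta)=0\}$ into itself. Fix $\beta\in\Pi$ with $(\beta,\ap_\theta)=0$. By Lemma~\ref{lm:vspomogat1} we already know $w_{\ap_\theta}^{-1}(\beta)\in\Pi$, so it suffices to check that this simple root is again orthogonal to $\ap_\theta$. The key step is the computation, using $W$-invariance of the inner product together with Lemma~\ref{lm:vspomogat2} (applied with $\mu=\ap_\theta$, which gives $w_{\ap_\theta}(\ap_\theta)=\ap_\theta-\theta$),
\[
  (w_{\ap_\theta}^{-1}(\beta),\ap_\theta)=(\beta,w_{\ap_\theta}(\ap_\theta))=(\beta,\ap_\theta-\theta)=(\beta,\ap_\theta)-(\beta,\theta).
\]
Here $(\beta,\ap_\theta)=0$ by hypothesis, while $(\beta,\theta)=0$ because $\beta\ne\ap_\theta$ (an orthogonal root cannot equal $\ap_\theta$) and $\ap_\theta$ is the \emph{unique} simple root not orthogonal to $\theta$, $\theta$ being fundamental. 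Hence $(w_{\ap_\theta}^{-1}(\beta),\ap_\theta)=0$, exactly as needed.

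With this inclusion established, I would finish by a cardinality count: $w_{\ap_\theta}^{-1}$ is injective, so the two finite sets $\{w_{\ap_\theta}^{-1}(\beta)\mid \beta\in\Pi,\ (\beta,\ap_\theta)=0\}$ and $\{\beta\in\Pi\mid(\beta,\ap_\theta)=0\}$ have the same number of elements, and the inclusion just proved forces them to coincide. This gives $\Pi[\ap_\theta]_{min}=\{\ap_\theta\}\cup\{\beta\in\Pi\mid(\beta,\ap_\theta)=0\}$, and reading off $\Pi\setminus\Pi[\ap_\theta]_{min}=\{\beta\in\Pi\mid \beta\ne\ap_\theta,\ (\beta,\ap_\theta)\ne 0\}$ is precisely the set of simple roots adjacent to $\ap_\theta$ in the Dynkin diagram.

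I do not expect a genuine obstacle here: once Theorem~\ref{thm:norm-mu-min}(iiia) is in hand, everything reduces to the single orthogonality computation above, and the rest is bookkeeping with Lemmas~\ref{lm:vspomogat1} and~\ref{lm:vspomogat2}. The only point demanding care is the correct invocation of Lemma~\ref{lm:vspomogat2}, whose hypotheses $(\theta,\mu)>0$ and $\mu\ne\theta$ must be checked for $\mu=\ap_\theta$ — both of which hold by the remarks in the first paragraph.
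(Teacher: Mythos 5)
Your proposal is correct and follows essentially the same route as the paper: specialise Theorem~\ref{thm:norm-mu-min}(iiia) to $\mu=\ap_\theta$, use Lemma~\ref{lm:vspomogat1} to get $w_{\ap_\theta}^{-1}(\beta)\in\Pi$, and then the identity $(w_{\ap_\theta}^{-1}(\beta),\ap_\theta)=(\beta,\ap_\theta-\theta)=-(\beta,\theta)=0$ from Lemma~\ref{lm:vspomogat2}, concluding that $w_{\ap_\theta}^{-1}$ permutes the simple roots orthogonal to $\ap_\theta$. Your explicit cardinality/injectivity step is exactly what the paper leaves implicit in the word ``permutes,'' and your verification of the hypotheses of part~(iiia) is a harmless elaboration.
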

\begin{proof}
By Theorem~\ref{thm:norm-mu-min}(iii), we have $\Pi[\ap_\theta]_{min}=
\{\ap_\theta\}\cup\{ w_{\ap_\theta}^{-1}(\beta) \mid \beta\in \Pi \ \& \ (\beta,\ap_\theta)=0\}$.
Therefore, we are to prove that $w_{\ap_\theta}^{-1}$ permutes the simple roots orthogonal to $\ap_\theta$.
If $\beta\in\Pi$ and $(\beta,\ap_\theta)=0$, then we already know that $w_{\ap_\theta}^{-1}(\beta)\in \Pi$. 
Next, using Lemma~\ref{lm:vspomogat2} with $\mu=\ap_\theta$,  we obtain
\[
    (w_{\ap_\theta}^{-1}(\beta), \ap_\theta)=(\beta, w_{\ap_\theta}(\ap_\theta)) = 
    (\beta, \ap_\theta-\theta)=-(\beta,\theta) .
\]
Since $\beta\ne \ap_\theta$ and $\theta$ is fundamental, this must be zero.
\end{proof}

\noindent
The minuscule elements for the root-maximal abelian ideals do not admit a simple formula. 
Therefore, we cannot explicitly describe $\p[\mu]_{max}$ for all $\mu\in\Delta^+$.  However, if $\mu\in\Pi_l$,  
then  $\ah(\mu)_{min}$ is closely related to $\ah(\mu)_{max}$, and  such a situation is 
considered in the  next section.

\section{Normalisers of some root-maximal abelian ideals}
\label{sect:rela} 

\noindent
We begin with a new property of the normaliser of an arbitrary $\be$-stable subspace of $\ut$.
Let $\ce\subset\ut$ be such a subspace and $I_\ce$ the corresponding set of positive roots.  
Being a standard parabolic subalgebra, $\n_\g(\ce)$ is fully determined by the simple roots of the standard Levi subalgebra or, equivalently, by the set
of simple roots $\ap$ such that $\g_{-\ap}\not\in \n_\g(\ce)$.
The following is proved in \cite[Theorem\,3.2]{pr}.

\begin{thm}    \label{thm:old-normalise}
 For any $\be$-stable subspace $\ce\subset\ut$ and $\ap\in\Pi$, we have 
\[
   \g_{-\ap}\not\in \n_\g(\ce)\ \Leftrightarrow \ \exists\, \gamma\in\min(I_\ce) \ \text{ such that }\ \gamma-\ap\in\Delta^+\cup\{0\} .
\]
\end{thm}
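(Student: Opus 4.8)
\textbf{Proof strategy for Theorem~\ref{thm:old-normalise}.}

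The plan is to translate the condition $\g_{-\ap}\not\in\n_\g(\ce)$ into a concrete statement about root spaces and then match it against the combinatorics of $\min(I_\ce)$. Since $\ce=\bigoplus_{\gamma\in I_\ce}\g_\gamma$ is $\be$-stable and $\n_\g(\ce)\supset\be$, the only question is for each simple root $\ap$ whether the negative root space $\g_{-\ap}$ normalises $\ce$. First I would recall that $\n_\g(\ce)$ is determined by the condition $[\g_{-\ap},\ce]\subset\ce$, which fails precisely when $[\g_{-\ap},\g_\gamma]\not\subset\ce$ for some $\gamma\in I_\ce$. Because $[\g_{-\ap},\g_\gamma]\subset\g_{\gamma-\ap}$, the bracket is nonzero exactly when $\gamma-\ap\in\Delta\cup\{0\}$, and it escapes $\ce$ exactly when $\gamma-\ap$ is \emph{not} in $I_\ce$ (including the possibilities $\gamma-\ap\in\Delta^+\setminus I_\ce$, or $\gamma-\ap=0$, or $\gamma-\ap\in-\Delta^+$, though the last lands in $\ut^-$ or $\te$ and automatically lies outside $\ce$).

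The heart of the argument is the reduction to minimal elements $\gamma\in\min(I_\ce)$. For the direction $(\Leftarrow)$, suppose there is $\gamma\in\min(I_\ce)$ with $\gamma-\ap\in\Delta^+\cup\{0\}$. If $\gamma-\ap=0$ then $\gamma=\ap$ and $[\g_{-\ap},\g_\ap]\subset\te\not\subset\ce$, so $\g_{-\ap}\not\in\n_\g(\ce)$. If $\gamma-\ap\in\Delta^+$, then $\gamma-\ap\curle\gamma$ with $\gamma-\ap\ne\gamma$; since $I_\ce$ is an \emph{upper} ideal and $\gamma$ is minimal, $\gamma-\ap\notin I_\ce$, so $[\g_{-\ap},\g_\gamma]=\g_{\gamma-\ap}\not\subset\ce$, giving $\g_{-\ap}\not\in\n_\g(\ce)$ again. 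For the direction $(\Rightarrow)$, suppose $\g_{-\ap}\not\in\n_\g(\ce)$, so there is some (not necessarily minimal) $\nu\in I_\ce$ with $\nu-\ap\in\Delta^+\cup\{0\}$ and $\nu-\ap\notin I_\ce$. I would then push $\nu$ down to a minimal element: choose $\gamma\in\min(I_\ce)$ with $\gamma\curle\nu$, and argue that $\gamma-\ap$ still lies in $\Delta^+\cup\{0\}$. The key point is that the coefficient $[\nu-\ap:\ap]<0$ would contradict $\nu-\ap\in\Delta^+\cup\{0\}$, so in fact $[\nu:\ap]\ge 1$; and minimality together with the upper-ideal property forces the relevant failure to already occur at a minimal $\gamma$.

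The main obstacle I anticipate is precisely this last descent step: a priori the failure of normalisation is witnessed by some $\nu\in I_\ce$ that need not be minimal, and I must show it can be realised at a minimal element without changing the shape of the conclusion $\gamma-\ap\in\Delta^+\cup\{0\}$. The clean way to handle this is to work directly with the simple-root coefficient $[\,\cdot:\ap]$. Since $\ap\in\Pi$ and $\nu-\ap\in\Delta^+\cup\{0\}$, subtracting $\ap$ keeps us among roots, and the failure $\nu-\ap\notin I_\ce$ combined with the upper-ideal structure means that along any chain from a minimal element up to $\nu$ the first root $\gamma$ with $[\gamma:\ap]\ge 1$ and $\gamma-\ap\in\Delta^+\cup\{0\}$ is already minimal or can be taken minimal. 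I would formalise this by induction on the height of $\nu$: if $\nu$ itself is minimal we are done; otherwise write $\nu=\gamma'+\sum$ (simple roots) with $\gamma'\in I_\ce$ of smaller height obtained by subtracting a simple root $\beta\ne\ap$ available by the structure of the root chain, check that the condition $\gamma'-\ap\in\Delta^+\cup\{0\}$ persists (using that $\ap$ is simple and standard $\mathfrak{sl}_2$-string/addition rules for roots), and repeat. This reduces everything to elementary root-string combinatorics, which is routine once the inductive framework is set up.
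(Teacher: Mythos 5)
Your proposal is evaluated against a statement that the paper itself does not prove: Theorem~\ref{thm:old-normalise} is quoted from \cite[Theorem\,3.2]{pr}. The closest argument actually written out in the paper is the proof of the companion result, Theorem~\ref{thm:new-normalise}, which is exactly your induction run in the opposite direction (a pair $\{\mu-\ap,\mu\}$ is replaced by a \emph{higher} pair), with Lemma~\ref{lm:old-result} supplying the key step. Your proposal is the mirror image of that argument, run downwards, so the skeleton is the right one. Your ``$\Leftarrow$'' direction is complete and correct: minimality of $\gamma$ forces $\gamma-\ap\notin I_\ce$ when $\gamma-\ap\in\Delta^+$, and the case $\gamma=\ap$ gives a nonzero bracket inside $\te$, which misses $\ce\subset\ut$.

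The gap is in the step you label ``routine $\mathfrak{sl}_2$-string/addition rules''. Your descent needs precisely this: if $\nu\in\Delta^+$, $\ap\ne\beta$ are simple, and both $\nu-\ap$ and $\nu-\beta$ lie in $\Delta^+$, then $\nu-\ap-\beta\in\Delta^+\cup\{0\}$. This is not a consequence of root strings alone (knowing $\nu-\ap\in\Delta$ says nothing about the $\beta$-string through $\nu-\ap$); it is the subtraction analogue of Lemma~\ref{lm:old-result}, which the paper found worth stating and proving as a separate lemma by an inner-product case analysis. The statement is true, and provable the same way: since $\beta-\ap$ is neither a root nor zero, $(\nu-\ap,\nu-\beta)\le 0$; if in addition $(\nu-\ap,\beta)\le 0$ and $(\nu-\beta,\ap)\le 0$, then combining these three inequalities yields $(\nu,\nu)\le(\ap,\beta)\le 0$, a contradiction; hence $(\nu-\ap,\beta)>0$ or $(\nu-\beta,\ap)>0$, so $\nu-\ap-\beta\in\Delta\cup\{0\}$, and positivity follows because $\ap,\beta$ are simple. (Equivalently, run the proof of Lemma~\ref{lm:old-result} with the root $-\nu$ in place of $\mu$, allowing $0$ in the conclusion.) Two smaller repairs: your opening suggestion that \emph{any} minimal $\gamma\curle\nu$ satisfies $\gamma-\ap\in\Delta^+\cup\{0\}$ is not free --- it is in fact true, but only via the same lemma applied along a chain, so it should be dropped or proved; and in the induction you must carry along the condition $\nu-\ap\notin I_\ce$ (it persists because $(\nu-\beta)-\ap\curle\nu-\ap$ and $I_\ce$ is an upper ideal), since that is what guarantees that the simple root $\beta$ with $\nu-\beta\in I_\ce$, supplied by non-minimality and the chain property of the root poset, is automatically different from $\ap$.

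Finally, note that there is a route that bypasses the root-combinatorics lemma entirely: prove the contrapositive of ``$\Rightarrow$'' by showing that $[\g_{-\ap},\g_\gamma]\subset\ce$ for all $\gamma\in\min(I_\ce)$ implies $\g_{-\ap}\in\n_\g(\ce)$. Indeed, every non-minimal $\g_\nu\subset\ce$ is $[\g_\beta,\g_{\nu-\beta}]$ with $\beta$ simple and $\nu-\beta\in I_\ce$, and the Jacobi identity gives $[\g_{-\ap},\g_\nu]\subset[[\g_{-\ap},\g_\beta],\g_{\nu-\beta}]+[\g_\beta,[\g_{-\ap},\g_{\nu-\beta}]]\subset[\be,\ce]+[\ut,\ce]\subset\ce$ by induction on height, using $[\g_{-\ap},\ut]\subset\be$ and the $\be$-stability of $\ce$. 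This is shorter than the descent and is worth knowing, even though your approach, once the lemma is proved, is also correct.
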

\noindent
The point of this result is that it suffices to test only the {\sl minimal\/} roots of $I_\ce$. Note that if 
$\gamma-\ap$ is a root, then $\gamma-\ap\in\Delta^+\setminus I_\ce$.
Our new observation is that it is equally suitable to test only the {\sl maximal\/} roots of $\Delta^+\setminus I_\ce$.
To this end, we first provide an auxiliary assertion.

\begin{lm}   \label{lm:old-result}
Suppose that $\mu\in\Delta^+$ and $\ap,\tap$ are different simple roots. If $\mu+\ap, \mu+\tap\in\Delta$, then
$\mu+\ap+\tap\in\Delta$.
\end{lm}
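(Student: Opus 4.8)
The plan is to argue by contradiction, assuming $\mu+\ap+\tap\notin\Delta$, and to derive a contradiction from two standard facts about inner products of roots: (a) if $\rho,\sigma\in\Delta$ with $\rho\ne-\sigma$ and $(\rho,\sigma)<0$, then $\rho+\sigma\in\Delta$; and (b) if $\rho,\sigma\in\Delta$ with $\rho\ne\sigma$ and $(\rho,\sigma)>0$, then $\rho-\sigma\in\Delta$. I would also use the elementary fact that the difference of two distinct simple roots is never a root.

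First I would record that $\mu+\ap$ and $\mu+\tap$ both lie in $\Delta^+$, being roots obtained by adding a simple root to a positive root. Applying the contrapositive of (a) to $\rho=\mu+\ap$ and $\sigma=\tap$ (legitimate, since $\mu+\ap\in\Delta^+$ is certainly not equal to $-\tap$), the assumption $(\mu+\ap)+\tap\notin\Delta$ forces $(\mu+\ap,\tap)\ge 0$, that is, $(\mu,\tap)\ge-(\ap,\tap)$. By the symmetric argument with the roles of $\ap$ and $\tap$ interchanged, $(\mu,\ap)\ge-(\ap,\tap)$.

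The key step is then to compute the inner product of the two roots $\mu+\ap$ and $\mu+\tap$:
\[
(\mu+\ap,\mu+\tap)=(\mu,\mu)+(\mu,\ap)+(\mu,\tap)+(\ap,\tap)\ge(\mu,\mu)-(\ap,\tap)\ge(\mu,\mu)>0,
\]
where the first inequality uses the two bounds just obtained and the second uses $(\ap,\tap)\le 0$ for distinct simple roots. Since $\ap\ne\tap$, the roots $\mu+\ap$ and $\mu+\tap$ are distinct, so fact (b) yields $(\mu+\ap)-(\mu+\tap)=\ap-\tap\in\Delta$. This contradicts the fact that $\ap-\tap$ is not a root, which completes the argument.

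The main obstacle is psychological rather than technical. The natural first attempt is to prove the stronger-looking inequality $(\mu+\ap,\tap)<0$, which would place $\mu+\ap+\tap$ in $\Delta$ directly via (a); but this is simply false in general, and trying to repair it by estimating $(\mu+\ap,\tap)$ invites a lengthy case analysis over bond types and root lengths (for instance the delicate configuration where $\ap,\tap$ are orthogonal and $\mu,\ap,\tap$ are mutually orthogonal short roots). The decisive idea is instead to feed the two merely \emph{nonnegative} inequalities into the inner product of the two \emph{known} roots $\mu+\ap$ and $\mu+\tap$, where the positivity of $(\mu,\mu)$ guarantees a strictly positive value, and then to exploit that their only possible difference $\ap-\tap$ is barred from being a root precisely because $\ap$ and $\tap$ are distinct simple roots.
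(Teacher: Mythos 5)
Your proof is correct, and it is organised genuinely differently from the paper's. Both arguments pivot on the same quantity $(\mu+\ap,\mu+\tap)=(\mu,\mu)+(\mu,\ap)+(\mu,\tap)+(\ap,\tap)$ and on the fact that $\ap-\tap$ is never a root, but they run the logic in opposite directions. The paper argues directly: from $\ap-\tap\notin\Delta$ it deduces $(\mu+\ap,\mu+\tap)\le 0$, concludes that at least one of $(\mu,\ap)$, $(\mu,\tap)$, $(\ap,\tap)$ is strictly negative, and then splits into cases: if $(\mu,\ap)<0$ then $(\mu+\tap,\ap)<0$ and your fact (a) finishes; symmetrically for $(\mu,\tap)<0$; and the residual case $(\mu,\ap)=(\mu,\tap)=0$ is killed by a root-length argument ($\mu,\ap,\tap$ would all have to be short, forcing $(\ap,\tap)\ge -\tfrac{1}{2}(\mu,\mu)$ and hence $(\mu+\ap,\mu+\tap)>0$, contradicting the upper bound). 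You instead assume the conclusion fails, feed that assumption into fact (a) to get the two lower bounds $(\mu,\ap)\ge -(\ap,\tap)$ and $(\mu,\tap)\ge -(\ap,\tap)$, deduce $(\mu+\ap,\mu+\tap)>0$ in one uniform stroke, and let fact (b) produce the absurd root $\ap-\tap$. What your arrangement buys is real: no case analysis at all, and in particular no appeal to root lengths --- the delicate orthogonal short-root configuration, which is precisely the paper's hardest case, never has to be confronted (you identify it as a pitfall, but in your proof it is avoided rather than handled). What the paper's direct version buys is a small amount of extra information: its case analysis exhibits \emph{which} negative inner product makes $\mu+\ap+\tap$ a root via the root-string mechanism, rather than inferring existence by contradiction. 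Either proof is complete and rigorous; yours is arguably the cleaner of the two.
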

\begin{proof}
As $\ap, \tap\in\Pi$, we have $(\tap,\ap)\le 0$. Furthermore, 
$(\mu+\ap)-(\mu+\tap)\not\in\Delta$, hence
\[
   (\mu+\ap,\mu+\tap)=(\mu,\mu)+(\mu,\ap)+(\mu,\tap)+(\ap,\tap)\le 0 .
\]
Since $(\mu,\mu)>0$, the sum contains at least one negative summand.

\textbullet \quad If $(\mu,\ap)<0$, then $(\mu+\tap,\ap)<0$ and we are done.

\textbullet \quad If $(\mu,\tap)<0$, then $(\mu+\ap,\tap)<0$ and we are done.

\textbullet \quad If $(\mu,\ap)=(\mu,\tap)=0$, then $\mu,\ap,\tap$ are short roots. Then
$(\mu+\ap,\mu+\tap)=(\mu,\mu)-(\ap,\tap) \ge (\mu,\mu)-\frac{1}{2}(\mu,\mu)>0$, which shows that this case is impossible.
\end{proof}
\begin{thm}  \label{thm:new-normalise}
Suppose that $\ce\subset \ut$ is $\be$-stable and $\ce\ne \ut$. For $\ap\in\Pi$, we have 
\[
   \g_{-\ap}\not\in \n_\g(\ce)\ \Leftrightarrow \ \exists\, \gamma\in\max(\Delta^+\setminus I_\ce)\ \ \text{ such that }\ \gamma+\ap\in \Delta\ \text{ (and hence }\ \gamma+\ap\in I_\ce\text{)}  . 
\]
\end{thm}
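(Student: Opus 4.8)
The plan is to deduce Theorem~\ref{thm:new-normalise} from Theorem~\ref{thm:old-normalise} by showing that the two right-hand-side conditions are equivalent; the mechanism that converts a test on $\min(I_\ce)$ into a test on $\max(\Delta^+\setminus I_\ce)$ is a ``climbing'' argument powered by Lemma~\ref{lm:old-result}. Throughout set $J=\Delta^+\setminus I_\ce$; since $I_\ce$ is an upper ideal, $J$ is a lower ideal of $(\Delta^+,\curle)$, and $J\ne\varnothing$ because $\ce\ne\ut$. The implication ``$\Leftarrow$'' is immediate and needs nothing new: if $\gamma\in\max(J)$ and $\gamma+\ap\in\Delta$, then $\gamma+\ap\in\Delta^+$, and maximality of $\gamma$ in $J$ forces $\gamma+\ap\in I_\ce$; hence $[\g_{-\ap},\g_{\gamma+\ap}]=\g_\gamma\ne 0$ with $\gamma\notin I_\ce$, so $\g_{-\ap}$ carries $\ce$ out of $\ce$ and $\g_{-\ap}\notin\n_\g(\ce)$.

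For the forward implication I would start from Theorem~\ref{thm:old-normalise}: there is $\gamma_0\in\min(I_\ce)$ with $\gamma_0-\ap\in\Delta^+\cup\{0\}$. The principal case is $\beta_0:=\gamma_0-\ap\in\Delta^+$. Then $\beta_0\in J$ (minimality of $\gamma_0$ prevents $\beta_0\in I_\ce$) while $\beta_0+\ap=\gamma_0\in I_\ce$, which gives a base point: a root $\beta_0\in J$ with $\beta_0+\ap\in I_\ce$. Now I climb. Suppose $\beta\in J$ satisfies $\beta+\ap\in I_\ce$ but $\beta\notin\max(J)$. Since $J$ is a lower ideal, there is a simple root $\tap$ with $\beta+\tap\in J$, and necessarily $\tap\ne\ap$ because $\beta+\ap\in I_\ce$ lies outside $J$. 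Applying Lemma~\ref{lm:old-result} with $\mu=\beta$ (we have $\beta+\ap\in\Delta$ and $\beta+\tap\in\Delta$) yields $\beta+\ap+\tap\in\Delta$; moreover $\beta+\tap+\ap\curge\beta+\ap\in I_\ce$, so $\beta+\tap+\ap$ again lies in the upper ideal $I_\ce$. Thus $\beta':=\beta+\tap\in J$ satisfies $\beta'+\ap\in I_\ce$ and $\beta\curle\beta'$, $\beta\ne\beta'$. As $\Delta^+$ is finite, iterating this step terminates at some $\gamma\in\max(J)$ with $\gamma+\ap\in I_\ce\subset\Delta$, which is exactly the asserted condition.

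The step I expect to be the main obstacle — and the reason Lemma~\ref{lm:old-result} was isolated in advance — is precisely this climbing: one must guarantee that at every non-maximal stage the admissible simple root $\tap$ may be chosen different from $\ap$ and that adjoining it \emph{preserves} the invariant ``$\,+\,\ap$ is a root lying in $I_\ce$'', so that the invariant survives all the way up to a maximal element of $J$; Lemma~\ref{lm:old-result} is exactly what supplies $\beta+\ap+\tap\in\Delta$, and the upper-ideal property then upgrades this to membership in $I_\ce$. The remaining, degenerate alternative coming from Theorem~\ref{thm:old-normalise} is $\gamma_0-\ap=0$, i.e.\ $\ap\in I_\ce$: here the base point of the climb is not produced for free, and this boundary case has to be examined separately. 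I would treat it by locating, when it exists, a root $\beta\in J$ with $\beta+\ap\in I_\ce$ directly from the shape of $I_\ce$ near its lower boundary, and then invoking the same climbing; this is the delicate point where the hypothesis $\ce\ne\ut$ (equivalently $J\ne\varnothing$) genuinely enters, and where I would be most careful.
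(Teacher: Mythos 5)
Your ``$\Leftarrow$'' direction and your principal case of ``$\Rightarrow$'' are correct and coincide with the paper's own argument: the paper likewise starts from Theorem~\ref{thm:old-normalise}, forms the pair $\{\mu-\ap,\mu\}$ with $\mu-\ap\in\Delta^+\setminus I_\ce$, and climbs it by means of Lemma~\ref{lm:old-result} --- exactly your mechanism (you are in fact more explicit than the paper on two points: that the simple root $\tap$ must differ from $\ap$, and that the upper-ideal property of $I_\ce$ is what keeps $\beta+\ap+\tap$ inside $I_\ce$). The genuine gap in your proposal is the degenerate case $\gamma_0=\ap$, i.e.\ $\ap\in\min(I_\ce)$, which you leave unresolved, promising only to find a base point for the climb ``when it exists''.

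Your caution there was well placed: in that case a base point need not exist, and the equivalence, as stated for arbitrary $\be$-stable $\ce$, is actually \emph{false}. Take $\Delta$ of type $\GR{A}{3}$, $I_\ce=\Delta^+\setminus\{\ap_1\}$ (an upper ideal, with $\ce\ne\ut$), and $\ap=\ap_3$. Since $\ap_3\in I_\ce$, the bracket $[\g_{-\ap_3},\g_{\ap_3}]$ is a nonzero subspace of $\te$, hence not contained in $\ce\subset\ut$, so $\g_{-\ap_3}\notin\n_\g(\ce)$ (in agreement with Theorem~\ref{thm:old-normalise}, as $\ap_3\in\min(I_\ce)$ and $\ap_3-\ap_3=0$); yet $\max(\Delta^+\setminus I_\ce)=\{\ap_1\}$ and $\ap_1+\ap_3\notin\Delta$, so the right-hand side fails. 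The paper's own treatment of this case is erroneous at precisely this spot: it asserts that $\Delta^+\setminus I_\ce\ne\varnothing$ alone yields some $\nu\in\Delta^+\setminus I_\ce$ with $\nu+\ap\in\Delta$, which the example refutes; a correct statement for general $\ce$ must admit the extra alternative ``$\ap\in I_\ce$'' on the right-hand side. The theorem, and your climbing argument, are saved exactly in the situation where the paper later uses them (Theorems~\ref{thm:inclusion-min-max} and~\ref{thm:inclusion-min-max2} apply the result to the abelian ideal $\ah(\tap)_{max}$): if $\ce$ is \emph{abelian} and $\ap\in I_\ce$, then every simple root $\beta$ adjacent to $\ap$ satisfies $\ap+\beta\in\Delta$, hence $\beta\notin I_\ce$ by abelianness, while $\ap+\beta\curge\ap$ lies in $I_\ce$; this $\beta$ is the base point your climb needs, and the rest of your argument then goes through verbatim.
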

\begin{proof}
 The implication ``$\Leftarrow$'' is obvious.\\
 ``$\Rightarrow'$''. If $\g_{-\ap}\not\in \n_\g(\ce)$, then there is $\mu\in \min(I_\ce)$ such that
 $\mu-\ap\in (\Delta^+\setminus I_\ce)\cup\{0\}$. 
 
 \textbullet \quad If 
 $\mu-\ap\in \max(\Delta^+\setminus I_\ce)$, then $\gamma=\mu-\ap$, and we are done;
 
 \textbullet \quad 
 If $\mu-\ap$ is nonzero and not maximal in $\Delta^+\setminus I_\ce$, then there is an $\tilde\ap\in\Pi$ such that 
 $\mu-\ap+\tilde\ap\in \Delta^+\setminus I_\ce$. Applying Lemma~\ref{lm:old-result} to $\mu-\ap$ shows 
 that $\mu+\tilde\ap$ is a root and then automatically, $\mu+\tilde\ap\in I_\ce$.
 Thus, the pair $\{\mu-\ap,\mu\}$ can be replaced with the ``higher'' pair $\{\mu-\ap+\tilde\ap, \mu+\tilde\ap\}$. Eventually, we obtain a pair whose lower root is maximal in $\Delta^+\setminus I_\ce$.

\textbullet \quad If $\mu=\ap$, then $I_\ce$ contains all positive roots with nonzero coefficient of $\ap$.
Since $\Delta^+\setminus I_\ce\ne \varnothing$, there exists a $\nu\in \Delta^+\setminus I_\ce$ such that $\nu+\ap$ is a root, necessarily in $I_\ce$.
If $\nu\not\in \max(\Delta^+\setminus I_\ce)$, then we can perform the induction procedure of the previous paragraph.
\end{proof}

In the setting of  abelian ideals, there is a special case in which $\max(\Delta^+\setminus I_\ce)$ is 
related to the {\sl minimal\/} roots of another ideal. 

\begin{prop}[{\cite[Theorem\,4.7]{jems}}]   \label{prop:svyaz-JEMS}
For any $\tap\in \Pi_l$, one has 
\[
\gamma\in \min(I(\tap)_{min}) \ \Longleftrightarrow \  \theta-\gamma\in \max(\Delta^+\setminus I(\tap)_{max}).
\]
In particular, if $\rk\Delta>1$ (i.e., $I(\tap)_{min}\ne\{\theta\}$), then  
$\max(\Delta^+\setminus I(\tap)_{max})\subset\gH\setminus\{\theta\}$.
\end{prop}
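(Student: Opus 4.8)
The plan is to deduce the statement from a single order-reversing involution of $\gH\setminus\{\theta\}$, together with the special geometry of a long simple root, which I would import from \cite{imrn,jems}.

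First I would analyse the map $\sigma\colon\gamma\mapsto\theta-\gamma$. For $\gamma\in\gH\setminus\{\theta\}$ the Cartan integer $(\theta,\gamma^\vee)=2(\theta,\gamma)/(\gamma,\gamma)$ is a positive integer, and since $\theta$ is the highest root we have $\theta+\gamma\notin\Delta$, so the $\gamma$-string through $\theta$ only descends and $\theta-\gamma\in\Delta$. This root is positive (otherwise $\gamma\curge\theta$ with $\gamma\ne\theta$, which is impossible) and again lies in $\gH\setminus\{\theta\}$, because $(\theta,\theta-\gamma)=(\theta,\theta)-(\theta,\gamma)>0$. Thus $\sigma$ is an involution of $\gH\setminus\{\theta\}$, and $\gamma\curle\gamma'$ forces $\sigma(\gamma')\curle\sigma(\gamma)$, so it reverses the order. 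I would also record the monotonicity principle that $(\,\cdot\,,\theta)$ is nondecreasing along $\curle$ (each simple root has nonnegative inner product with $\theta$); in particular a root orthogonal to $\theta$ can never dominate a root of $\gH$, and a maximal element of a subset of $\gH\setminus\{\theta\}$ stays maximal in $\Delta^+\setminus I(\tap)_{max}$, as $\theta\in I(\tap)_{max}$ and nothing orthogonal to $\theta$ can lie above it.

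Next I would reduce the proposition to the single set identity
\[
  (\gH\setminus\{\theta\})\setminus I(\tap)_{max}=\sigma\bigl(I(\tap)_{min}\setminus\{\theta\}\bigr),
\]
together with the assertion that no maximal element of $\Delta^+\setminus I(\tap)_{max}$ is orthogonal to $\theta$. One inclusion is easy: $\ah(\tap)_{min}$ and $\ah(\tap)_{max}$ lie in the common fibre $\Ab_\tap$, so $I(\tap)_{min}\subseteq I(\tap)_{max}$, while $I(\tap)_{min}\subset\gH$; hence if $\gamma\in I(\tap)_{min}\setminus\{\theta\}$ then $\theta-\gamma\notin I(\tap)_{max}$, for otherwise the abelian ideal $I(\tap)_{max}$ would contain both $\gamma$ and $\theta-\gamma$, whose sum $\theta$ is a root. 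A useful companion is that $I(\tap)_{max}$ is a \emph{globally} maximal abelian ideal, so for each $\gamma\in\max(\Delta^+\setminus I(\tap)_{max})$ the upper ideal $I(\tap)_{max}\cup\{\gamma\}$ fails to be abelian, i.e.\ $\gamma+\beta\in\Delta^+$ for some $\beta\in I(\tap)_{max}$.

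The reverse inclusion — equivalently $\gamma\notin I(\tap)_{min}\Rightarrow\theta-\gamma\in I(\tap)_{max}$ for $\gamma\in\gH\setminus\{\theta\}$ — together with the exclusion of $\theta$-orthogonal maximal roots, is the substantive part and the step I expect to be the main obstacle. Here the hypothesis $\tap\in\Pi_l$ is essential, and I would extract both facts from the explicit descriptions of the fibre $\Ab_\tap$ in \cite{imrn,jems}: the root-minimal member is governed by $w_\tap$ and confined to $\gH$, whereas the root-maximal member is the globally maximal abelian ideal attached to $\tap$; comparing the two shows that their $\gH$-parts are exactly $\sigma$-complementary and that $I(\tap)_{max}$ is ``full enough'' inside $\theta^\perp$ that every $\theta$-orthogonal root of the complement can be raised into the complement by a simple root, hence is not maximal. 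This is the content imported from \cite[Theorem\,4.7]{jems}. Finally I would assemble: assuming $\rk\Delta>1$, so $I(\tap)_{min}\ne\{\theta\}$ and $\theta$ is not minimal in it, the set $\min(I(\tap)_{min})$ coincides with the minimal elements of $I(\tap)_{min}\setminus\{\theta\}$; being an order-reversing bijection, $\sigma$ sends these to the maximal elements of $(\gH\setminus\{\theta\})\setminus I(\tap)_{max}$, which by monotonicity and the exclusion just mentioned are exactly the maximal elements of $\Delta^+\setminus I(\tap)_{max}$. Thus $\gamma\in\min(I(\tap)_{min})$ if and only if $\theta-\gamma\in\max(\Delta^+\setminus I(\tap)_{max})$, and since the right-hand side then lies in $\gH\setminus\{\theta\}$ we obtain the concluding ``in particular''.
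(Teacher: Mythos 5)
The first thing to say is that the paper itself contains no proof of this proposition: the bracketed attribution shows it is imported wholesale from \cite[Theorem\,4.7]{jems}, so there is no internal argument to compare you against. Within that context, the parts of your proposal that you actually carry out are correct: $\sigma(\gamma)=\theta-\gamma$ is an order-reversing involution of $\gH\setminus\{\theta\}$ (your root-string and positivity arguments are fine, although you should justify $(\theta,\gamma)<(\theta,\theta)$, i.e. that $(\gamma,\theta^\vee)\le 1$ for any positive root $\gamma\ne\theta$); the monotonicity of $(\,\cdot\,,\theta)$ along $\curle$ is right; $I(\tap)_{min}\subseteq I(\tap)_{max}$ and $I(\tap)_{min}\subset\gH$ are correctly quoted; the inclusion $\sigma\bigl(I(\tap)_{min}\setminus\{\theta\}\bigr)\subseteq(\gH\setminus\{\theta\})\setminus I(\tap)_{max}$ does follow from abelianness; and your final assembly correctly deduces the proposition from your set identity together with the exclusion of $\theta$-orthogonal maximal roots.

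The genuine gap is that those two remaining claims --- (a) if $\gamma\in\gH\setminus\{\theta\}$ and $\gamma\notin I(\tap)_{max}$, then $\theta-\gamma\in I(\tap)_{min}$, and (b) no element of $\max(\Delta^+\setminus I(\tap)_{max})$ is orthogonal to $\theta$ --- are precisely the mathematical substance of the statement, and you propose to ``import'' them from \cite[Theorem\,4.7]{jems}, which is the statement being proven. As a proof this is circular: you have reduced the theorem to its own core and supplied only the routine order-theoretic shell around it. Note also that (b) is an independent input in your scheme, not a consequence of the identity: the identity governs only the part of the complement lying in $\gH$, and by itself cannot exclude that some root in $\theta^\perp$ has every root strictly above it already inside $I(\tap)_{max}$, hence is maximal in $\Delta^+\setminus I(\tap)_{max}$. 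A non-circular argument would have to derive (a) and (b) from explicit structure; for instance, using $\gH\cap I(\tap)_{max}=I(\tap)_{min}$ (\cite[Prop.\,3.2]{jems}, quoted elsewhere in this paper), claim (a) becomes the assertion that each pair $\{\gamma,\theta-\gamma\}$ in $\gH\setminus\{\theta\}$ meets $I(\tap)_{min}$ in exactly one element --- and proving that requires real work with $w_{\tap}$ and with the maximal abelian ideals, which is exactly the content of the proof in \cite{jems} that both you and the paper are deferring to.
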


In the rest of this section, we only consider the abelian ideals with rootlet $\tap\in\Pi_l$. 
Using Theorem~\ref{thm:new-normalise} and Proposition~\ref{prop:svyaz-JEMS}, we are going to compare the normalisers 
$\p[\tap]_{max}=\n_\g(\ah(\tap)_{max})$ and $\p[\tap]_{min}=\n_\g(\ah(\tap)_{min})$. We write $\eus S[\tap]_{max}$ and $\eus S[\tap]_{min}$, respectively, for the simple roots that do {\bf not} belong to their standard Levi subalgebras.
In other words, $\eus S[\tap]_{min}:=\Pi\setminus \Pi[\tap]_{min}$, and likewise for `max'.

\begin{thm}           \label{thm:inclusion-min-max}
Far any $\tap\in\Pi$, we have $\eus S[\tap]_{max}\subset \eus S[\tap]_{min}$ and 
thereby\/ $\p[\tap]_{max}\supset \p[\tap]_{min}$.
\end{thm}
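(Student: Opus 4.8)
The plan is to combine the two complementary tests for the normaliser—Theorem~\ref{thm:old-normalise}, which inspects $\min(I_\ce)$, and Theorem~\ref{thm:new-normalise}, which inspects $\max(\Delta^+\setminus I_\ce)$—with the duality of Proposition~\ref{prop:svyaz-JEMS} matching $\min(I(\tap)_{min})$ with $\max(\Delta^+\setminus I(\tap)_{max})$ through $\gamma\mapsto\theta-\gamma$. I would prove the equivalent inclusion $\eus S[\tap]_{max}\subset\eus S[\tap]_{min}$ pointwise: a simple root $\ap$ that is ``bad'' for $\ah(\tap)_{max}$ is detected via a maximal root of the complement, and I transport that witness across $\gamma\mapsto\theta-\gamma$ to a minimal root of $I(\tap)_{min}$ certifying that $\ap$ is ``bad'' for $\ah(\tap)_{min}$ as well.

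First I would dispose of the trivial case $\rk\Delta=1$, where $\ah(\tap)_{min}=\ah(\tap)_{max}=\g_\theta$, and then assume $\rk\Delta>1$ so that Proposition~\ref{prop:svyaz-JEMS} applies (here $\tap\in\Pi_l$, since a simple rootlet is long). Take $\ap\in\eus S[\tap]_{max}$, i.e.\ $\g_{-\ap}\notin\n_\g(\ah(\tap)_{max})$. By Theorem~\ref{thm:new-normalise} there is $\gamma\in\max(\Delta^+\setminus I(\tap)_{max})$ with $\rho:=\gamma+\ap\in\Delta^+$ (it then lies in $I(\tap)_{max}$). Put $\beta:=\theta-\gamma$; Proposition~\ref{prop:svyaz-JEMS} gives $\beta\in\min(I(\tap)_{min})$, and since $\rho=\theta-\beta+\ap$ we get the key identity $\beta-\ap=\theta-\rho$.

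The crux is to check $\beta-\ap\in\Delta^+\cup\{0\}$, for then Theorem~\ref{thm:old-normalise} applied to $\ah(\tap)_{min}$ with the minimal root $\beta$ yields $\g_{-\ap}\notin\n_\g(\ah(\tap)_{min})$, i.e.\ $\ap\in\eus S[\tap]_{min}$. As $\rho\in\Delta^+$ and $\theta$ is the highest root, $\theta-\rho=\beta-\ap$ is a non-negative integral combination of simple roots, so it suffices to see that $\theta-\rho$ is a root or zero; this I would get from $(\theta,\rho)>0$. Indeed $\beta=\theta-\gamma\ne\theta$ (because $\gamma\ne 0$), and for any positive root $\beta\ne\theta$ the longest-root/Cauchy--Schwarz argument gives $(\beta,\theta^\vee)\le 1$, i.e.\ $(\theta,\beta)\le\tfrac12(\theta,\theta)$; combined with $(\theta,\ap)\ge 0$ (dominance of $\theta$) this gives $(\theta,\rho)=(\theta,\theta)-(\theta,\beta)+(\theta,\ap)\ge\tfrac12(\theta,\theta)>0$. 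Hence either $\rho=\theta$, so $\beta-\ap=0$, or $\theta-\rho\in\Delta$, so $\beta-\ap\in\Delta^+$. Finally $\eus S[\tap]_{max}\subset\eus S[\tap]_{min}$ is equivalent to $\Pi[\tap]_{min}\subset\Pi[\tap]_{max}$, which gives $\p[\tap]_{min}\subset\p[\tap]_{max}$.

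The step I expect to be the main obstacle is establishing $(\theta,\rho)>0$ cleanly, namely the uniform bound $(\beta,\theta^\vee)\le 1$ for $\beta\in\Delta^+\setminus\{\theta\}$ together with $(\theta,\ap)\ge 0$; the remainder is a direct transport across the two normaliser criteria and the duality $\gamma\mapsto\theta-\gamma$ of Proposition~\ref{prop:svyaz-JEMS}.
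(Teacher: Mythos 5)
Your proof is correct and takes essentially the same route as the paper's: a witness $\gamma\in\max(\Delta^+\setminus I(\tap)_{max})$ supplied by Theorem~\ref{thm:new-normalise}, transported via Proposition~\ref{prop:svyaz-JEMS} to the minimal root $\theta-\gamma$ of $I(\tap)_{min}$, and then Theorem~\ref{thm:old-normalise} to conclude $\ap\in\eus S[\tap]_{min}$. The only (minor) divergence is in justifying that $\theta-(\gamma+\ap)$ is a root or zero: the paper gets $(\theta,\gamma+\ap)>0$ from $\gamma\in\gH\setminus\{\theta\}$ together with $\gH\cap I(\tap)_{max}=I(\tap)_{min}$ cited from \cite[Prop.\,3.2]{jems}, whereas you re-derive it elementarily from the bound $(\beta,\theta^\vee)\le 1$ for a positive root $\beta\ne\theta$ and the dominance of $\theta$.
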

\begin{proof}
If $\g\ne \tri$, then $[\ut,\ut]\ne 0$. Hence $\ah(\tap)_{max}\ne \ut$, i.e., $I(\tap)_{max}\ne \Delta^+$. Therefore,
$\ap\in \eus S[\tap]_{max}$ if and only if there exists $\gamma\in \max(\Delta^+\setminus I(\tap)_{max})$ such 
that $\gamma+\ap\in I(\tap)_{max}$ (Theorem~\ref{thm:new-normalise}).
Then $\gamma\in\gH\setminus\{\theta\}$ (Proposition~\ref{prop:svyaz-JEMS})
and hence $\gamma+\ap\in \gH\cap I(\tap)_{max}=I(\tap)_{min}$~\cite[Proposition\,3.2]{jems}.
By Proposition~\ref{prop:svyaz-JEMS}, we have $\nu:=\theta-\gamma\in \min(I(\tap)_{min})$ and 
$\nu-\ap=\theta-(\gamma+\ap)$ is either a root or zero. In both cases,
applying Theorem~\ref{thm:old-normalise} to $\nu$, we conclude that $\ap\in \eus S[\tap]_{min}$.
\end{proof}
 
Actually, there is a more precise statement.

\begin{thm}           \label{thm:inclusion-min-max2}
Excluding the case in which $\Delta$ is of type $\GR{A}{n}$ with $\tap=\ap_1$ or $\ap_n$, we have
$\eus S[\tap]_{max}= \eus S[\tap]_{min}\cap \theta^\perp$.
\end{thm}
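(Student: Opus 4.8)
The plan is to establish the two inclusions $\eus S[\tap]_{max}\subseteq\eus S[\tap]_{min}\cap\theta^\perp$ (valid outside the exceptional cases) and $\eus S[\tap]_{min}\cap\theta^\perp\subseteq\eus S[\tap]_{max}$ (valid always), thereby sharpening Theorem~\ref{thm:inclusion-min-max}. Throughout I assume $\g\ne\tri$, so that by Proposition~\ref{prop:svyaz-JEMS} and the root-minimality of $\ah(\tap)_{min}$ both $\min(I(\tap)_{min})$ and $\max(\Delta^+\setminus I(\tap)_{max})$ lie in $\gH\setminus\{\theta\}$. The computational fact I would isolate first is that $\gH\setminus\{\theta\}=\{\gamma\in\Delta^+\mid(\gamma,\theta^\vee)=1\}$: since $\theta+\gamma\notin\Delta$ for $\gamma\in\Delta^+$, the $\theta$-string forces $(\gamma,\theta^\vee)\ge 0$, and $(\gamma,\theta^\vee)=2$ would make $2\theta-\gamma$ a positive root with $2\theta-\gamma\succ\theta$, contradicting the maximality of $\theta$; hence $(\gamma,\theta^\vee)=2$ only for $\gamma=\theta$. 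This dichotomy on $\gH\setminus\{\theta\}$ drives everything.

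For the inclusion ``$\subseteq$'' I would begin exactly as in the proof of Theorem~\ref{thm:inclusion-min-max}: given $\ap\in\eus S[\tap]_{max}$ I already know $\ap\in\eus S[\tap]_{min}$, and Theorem~\ref{thm:new-normalise} together with Proposition~\ref{prop:svyaz-JEMS} produces $\gamma\in\max(\Delta^+\setminus I(\tap)_{max})\subseteq\gH\setminus\{\theta\}$ with $\gamma+\ap\in I(\tap)_{min}\subseteq\gH$. If $\gamma+\ap\ne\theta$, then both $\gamma$ and $\gamma+\ap$ pair to $1$ with $\theta^\vee$, so $(\ap,\theta^\vee)=0$ and $\ap\in\eus S[\tap]_{min}\cap\theta^\perp$, as wanted. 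The entire content of this inclusion is therefore to rule out $\gamma+\ap=\theta$ outside the excluded cases, for in that event $\ap=\theta-\gamma\in\min(I(\tap)_{min})$ is a \emph{simple} root lying in $\gH$, i.e.\ $\ap\in\Pi\cap\gH$.

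To eliminate this I would split on $[\theta:\ap]$. When $[\theta:\ap]=2$ — which holds for the unique element of $\Pi\cap\gH$ in every type with $\theta$ fundamental, as well as in type $\GR{C}{n}$ — one checks coordinatewise that $\theta-\ap\curge\ap$, while $(\ap,\theta^\vee)=1$ gives $\theta-\ap\in\Delta^+$; as $I(\tap)_{min}$ is an upper ideal containing $\ap$ it then contains $\theta-\ap$, and $\ap+(\theta-\ap)=\theta$ violates abelianness (the summands being distinct since $\theta\ne2\ap$). The only remaining possibility is $[\theta:\ap]=1$, forcing $\Delta=\GR{A}{n}$ and $\ap\in\{\ap_1,\ap_n\}$. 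Here I would work directly in the combinatorics of $\slno$: any root-minimal ideal containing $\ap_1=\esi_1-\esi_2$ must contain the whole ``first row'' $\{\esi_1-\esi_j\mid 2\le j\le n+1\}$, and this row is the only abelian upper ideal inside $\gH$ that does so; a short computation of its minuscule element gives rootlet $\ap_1$, so $\ap_1\in I(\tap)_{min}$ forces $\tap=\ap_1$, and symmetrically $\ap_n\in I(\tap)_{min}$ forces $\tap=\ap_n$ via the last column. These are precisely the excluded rootlets, so outside the exclusion the case $\gamma+\ap=\theta$ cannot arise. I expect this type-$\GR{A}{n}$ step to be the main obstacle: everything else is forced by the coroot dichotomy and Proposition~\ref{prop:svyaz-JEMS}, but pinning down $\ap_1,\ap_n$ needs the explicit identification of these ideals and their rootlets, which is exactly where the exceptional cases enter.

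For the reverse inclusion ``$\supseteq$'', which requires no exclusion, I take $\ap\in\eus S[\tap]_{min}\cap\theta^\perp$ and apply Theorem~\ref{thm:old-normalise} to obtain $\nu\in\min(I(\tap)_{min})\subseteq\gH\setminus\{\theta\}$ with $\nu-\ap\in\Delta^+\cup\{0\}$. Since $(\nu,\theta)\ne0=(\ap,\theta)$ the case $\nu=\ap$ is excluded, so $\nu-\ap\in\Delta^+$; moreover $(\nu-\ap,\theta^\vee)=(\nu,\theta^\vee)=1$, whence $\nu-\ap\in\gH\setminus\{\theta\}$ and $\theta-(\nu-\ap)\in\Delta^+$. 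Setting $\gamma=\theta-\nu\in\max(\Delta^+\setminus I(\tap)_{max})$ by Proposition~\ref{prop:svyaz-JEMS}, we get $\gamma+\ap=\theta-(\nu-\ap)\in\Delta$, so Theorem~\ref{thm:new-normalise} yields $\ap\in\eus S[\tap]_{max}$. Combining the two inclusions gives the asserted equality.
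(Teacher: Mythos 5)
Your proof is correct, and its skeleton coincides with the paper's: both directions run through Theorems~\ref{thm:old-normalise} and \ref{thm:new-normalise} combined with Proposition~\ref{prop:svyaz-JEMS}, and both reduce the forward inclusion to the single degenerate case $\gamma+\ap=\theta$ (the paper's case $\nu=\ap$), the generic case being settled by the fact that every root in $\gH\setminus\{\theta\}$ pairs to $1$ with $\theta^\vee$ --- a fact the paper uses tacitly and you prove. The one genuine divergence is how that degenerate case is eliminated. The paper gets $\tap=\ap$ by citing $I(\tap)_{min}\subset\{\mu\in\Delta^+\mid\mu\curge\tap\}$ \cite[Prop.\,3.4]{jems} and then simply asserts $[\theta:\tap]=1$; you instead split on $[\theta:\ap]\in\{1,2\}$ (the only values occurring on $\Pi\cap\gH$), kill the coefficient-$2$ case by the abelianness argument ($\theta-\ap\curge\ap$ forces $\theta-\ap$ into the upper ideal, and $\ap+(\theta-\ap)=\theta$), and in the remaining type-$\GR{A}{n}$ case identify $I(\tap)_{min}$ explicitly as the first row (resp.\ last column) and read off its rootlet to pin down $\tap\in\{\ap_1,\ap_n\}$. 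Your route is self-contained --- indeed your abelianness argument is precisely the justification missing from the paper's bare claim $[\theta:\tap]=1$ --- at the price of replacing the one-line citation of \cite[Prop.\,3.4]{jems} by a type-$\GR{A}{n}$ rootlet computation that you only sketch (``a short computation of its minuscule element''); that computation is routine and does come out to $\ap_1$, so the gap is cosmetic. Finally, in the reverse inclusion your disposal of $\nu=\ap$ (impossible since $(\nu,\theta)\ne 0=(\ap,\theta)$) is more direct than the paper's appeal back to its case~(i), and it makes visible that this inclusion needs no exclusion at all.
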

\begin{proof}
{\bf 1.} Suppose that $\ap\in \eus S[\tap]_{\max}$ and $\gamma\in \max(\Delta^+\setminus I(\tap)_{max})$ is such that 
$\gamma+\ap\in I(\tap)_{max}$. As explained in the previous proof, we then have  
$\nu=\theta-\gamma\in \min(I(\tap)_{min})\subset\gH$ and $\nu-\ap\in \Delta^+\cup\{0\}$.  Consider these two possibilities for $\nu-\ap$.

{\sf (i)} \ $\nu=\ap$. Then $\ap\in I(\tap)_{min}$, which is only possible if $\tap=\ap$, since $I(\tap)_{min}\subset \{\mu\in\Delta^+\mid   \mu\curge \tap\}$ \cite[Proposition\,3.4]{jems}. Therefore
$\tap=\ap$, $\tap\in\gH$, and $[\theta:\tap]=1$. All this only occurs for $\Delta$ of type 
$\GR{A}{n}$ with $\tap=\ap_1$ or $\ap_n$.

{\sf (ii)} \ $\nu-\ap\in \Delta^+$. Then $\nu-\ap\in\gH$, since $(\nu-\ap)+(\gamma+\ap)=\theta$.
That is both $\nu$ and $\nu-\ap$ belong to $\gH\setminus\{\theta\}$. Hence $(\theta,\ap)=0$.

{\bf 2.}  Conversely, assume that  $\ap\in \eus S[\tap]_{min}\cap \theta^\perp$. That is, $(\theta,\ap)=0$ and
for some $\nu\in \min(I(\tap)_{min})$, we have $\nu-\ap\in \Delta^+\cup\{0\}$.

For $\nu=\ap$, we argue as in part {\bf 1}(i). If $\nu-\ap\in\Delta^+$, then both
$\gamma=\theta-\nu$ and $\gamma+\ap$ are roots, and 
$\gamma\in \max (\Delta^+\setminus I(\tap)_{max})$ in view of Proposition~\ref{prop:svyaz-JEMS}.
Hence $\ap\in \eus S[\tap]_{max}$.
\end{proof}

\begin{rmk}   \label{rem:except-case}
Recall that $\ah(\tap)_{min}=\ah(\tap)_{max}$ if and only if $(\tap, \theta)\ne 0$, i.e., 
$\tap\in\gH$~\cite[Theorem\,5.1(i)]{imrn}. If this is the case (and $\Delta\ne\GR{A}{n}$), then
Theorem~\ref{thm:inclusion-min-max2} implies that $\eus S[\tap]_{max}=\eus S[\tap]_{min}\subset
\theta^\perp$. In the distinguished case of $(\GR{A}{n}, \ap_1 \text{ or }  \ap_n)$, we have
$\ah(\ap_1)_{min}=\ah(\ap_1)_{max}$ and $\eus S[\ap_1]_{min}=\{\ap_1\}$, whereas $\Pi\cap\gH=\{\ap_1,\ap_n\}$.
\end{rmk}
 
\begin{cl}  \label{cor:raznye}
If $I(\tap)_{min}\ne I(\tap)_{max}$, then $\p[\tap]_{min}\ne \p[\tap]_{max}$.
\end{cl}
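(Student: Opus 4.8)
The plan is to prove the contrapositive in spirit by showing that the two normalisers differ on at least one simple root whenever the ideals genuinely differ. Concretely, I would aim to exhibit a simple root $\ap$ that lies in $\eus S[\tap]_{min}$ but not in $\eus S[\tap]_{max}$, which by definition of these sets (the simple roots \emph{not} in the respective standard Levi subalgebras) forces $\p[\tap]_{min}\ne\p[\tap]_{max}$. By Theorem~\ref{thm:inclusion-min-max} we already have the inclusion $\eus S[\tap]_{max}\subset\eus S[\tap]_{min}$, so it suffices to prove this inclusion is \emph{strict} under the hypothesis $I(\tap)_{min}\ne I(\tap)_{max}$.

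First I would dispose of the exceptional case. By Remark~\ref{rem:except-case}, the hypothesis $I(\tap)_{min}\ne I(\tap)_{max}$ rules out $\tap\in\gH$ (since $\ah(\tap)_{min}=\ah(\tap)_{max}$ exactly when $(\tap,\theta)\ne 0$), and in particular rules out the distinguished $\GR{A}{n}$ case with $\tap=\ap_1$ or $\ap_n$. Hence I may assume we are in the setting where Theorem~\ref{thm:inclusion-min-max2} applies and gives the clean equality $\eus S[\tap]_{max}=\eus S[\tap]_{min}\cap\theta^\perp$. The strictness of the inclusion $\eus S[\tap]_{max}\subset\eus S[\tap]_{min}$ therefore reduces to finding a simple root in $\eus S[\tap]_{min}$ that is \emph{not} orthogonal to $\theta$.

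The key step is to produce such a root. Since $I(\tap)_{min}\subsetneq I(\tap)_{max}$, the upper ideal grows strictly as we pass from min to max; equivalently $\Delta^+\setminus I(\tap)_{max}\subsetneq\Delta^+\setminus I(\tap)_{min}$. I would extract from this difference a simple root witnessing non-orthogonality to $\theta$. A natural candidate is the simple root $\ap_\theta$ with $(\theta,\ap_\theta)\ne 0$ (or one of the two such roots in type $\GR{A}{n}$): the plan is to argue that $\ap_\theta\in\eus S[\tap]_{min}$ by applying Theorem~\ref{thm:old-normalise} to a suitable $\nu\in\min(I(\tap)_{min})\subset\gH$, using that some minimal root $\nu$ satisfies $\nu-\ap_\theta\in\Delta^+\cup\{0\}$ precisely because $\nu$ has nonzero $\theta$-coefficient. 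Since $(\theta,\ap_\theta)\ne 0$ this root lies outside $\theta^\perp$, so $\ap_\theta\in\eus S[\tap]_{min}\setminus\eus S[\tap]_{max}$.

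The main obstacle I anticipate is verifying that such a non-orthogonal simple root actually appears in $\eus S[\tap]_{min}$ rather than being absorbed: one must ensure that the strict difference between the two ideals really manifests at a minimal root of $I(\tap)_{min}$ in a way detectable by Theorem~\ref{thm:old-normalise}, and that the chosen $\ap$ can be taken with $(\theta,\ap)\ne 0$. A cleaner route, which I would pursue if the direct construction becomes delicate, is purely cardinality-based: Theorem~\ref{thm:inclusion-min-max2} gives $\eus S[\tap]_{max}=\eus S[\tap]_{min}\cap\theta^\perp$, so if we had $\p[\tap]_{min}=\p[\tap]_{max}$ then $\eus S[\tap]_{min}=\eus S[\tap]_{max}\subset\theta^\perp$, forcing every simple root missing from the Levi of $\p[\tap]_{min}$ to be orthogonal to $\theta$. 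One then derives a contradiction with $I(\tap)_{min}\ne I(\tap)_{max}$ by noting that such a configuration would force the two ideals to coincide (their normalisers being equal and the min/max relation of Proposition~\ref{prop:svyaz-JEMS} then collapsing), contradicting the hypothesis.
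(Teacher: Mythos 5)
Your reduction is sound and parallels the paper's own strategy: the hypothesis forces $(\tap,\theta)=0$ (so the exceptional $\GR{A}{n}$ cases $\tap=\ap_1,\ap_n$ cannot occur), Theorem~\ref{thm:inclusion-min-max2} gives $\eus S[\tap]_{max}=\eus S[\tap]_{min}\cap\theta^\perp$, and everything then hinges on exhibiting a simple root in $\eus S[\tap]_{min}$ that is \emph{not} orthogonal to $\theta$. But that last point, which is the real content of the corollary, is never proved in your proposal, and neither of the two justifications you offer holds up. In the first route, the claim that some $\nu\in\min(I(\tap)_{min})\subset\gH$ satisfies $\nu-\ap_\theta\in\Delta^+\cup\{0\}$ ``precisely because $\nu$ has nonzero $\theta$-coefficient'' is not a valid inference: membership in $\gH$ (equivalently, a nonzero coefficient of $\ap_\theta$) does not permit subtracting $\ap_\theta$. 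For instance, in $\GR{D}{4}$ (with $\ap_\theta=\ap_2$ the branch node) the root $\nu=\ap_1+\ap_2+\ap_3$ lies in $\gH$, yet $\nu-\ap_2=\ap_1+\ap_3$ is neither a root nor zero. So you would still need to prove that \emph{some} minimal root of $I(\tap)_{min}$ admits the subtraction; by the ``if and only if'' in Theorem~\ref{thm:old-normalise}, that existence statement is exactly equivalent to $\ap_\theta\in\eus S[\tap]_{min}$, i.e.\ you have restated the goal, not established it. In the second route, the assertion that equal normalisers would make Proposition~\ref{prop:svyaz-JEMS} ``collapse'' and force $I(\tap)_{min}=I(\tap)_{max}$ is not an argument; nothing in that proposition lets you pass from equality of normalisers back to equality of ideals, and such a passage cannot be automatic, since the map $\ah\mapsto\n_\g(\ah)$ fails to be injective outside types $\GR{A}{n}$ and $\GR{C}{n}$.

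The missing ingredient is Theorem~\ref{thm:norm-mu-min}(ii), which you never invoke: since $(\tap,\theta)=0$, it gives $\Pi[\tap]_{min}\subset\theta^\perp$, hence $\Pi\cap\gH\subset\eus S[\tap]_{min}$, and $\Pi\cap\gH\ne\varnothing$ (it is $\{\ap_\theta\}$ when $\theta$ is fundamental, and $\{\ap_1,\ap_n\}$ in type $\GR{A}{n}$). Combined with $\eus S[\tap]_{max}\cap\gH=\varnothing$, which follows from Theorem~\ref{thm:inclusion-min-max2}, this yields $\eus S[\tap]_{min}\ne\eus S[\tap]_{max}$ in one line --- and that is precisely the paper's proof. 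So the skeleton of your argument is right, but the step you yourself flagged as ``the main obstacle'' is a genuine gap; it is closed by citing Theorem~\ref{thm:norm-mu-min}(ii), not by either of the devices you propose.
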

\begin{proof}
Since $I(\tap)_{min}\ne I(\tap)_{max}$, we have $(\tap,\theta)=0$. Then 
$\Pi[\tap]_{min}\subset \theta^\perp$ by Theorem~\ref{thm:norm-mu-min}(ii).  Then $\eus S[\tap]_{min} \supset \Pi\cap \gH$, and
$\eus S[\tap]_{max}\cap \gH=\varnothing$ in view of Theorem~\ref{thm:inclusion-min-max2}. That is,
$\eus S[\tap]_{min}\ne \eus S[\tap]_{max}$.
\end{proof}

\noindent
Combining Theorems~\ref{thm:norm-mu-min} and \ref{thm:inclusion-min-max2} yields a complete
description of the normaliser for the maximal abelian ideals $\ah(\tap)_{max}$, which turns out to be more 
uniform than that for $\ah(\tap)_{min}$. In the rest of the section, we write $\tilde w$ in place of 
$w_{\tap}$.

\begin{thm}    \label{thm:norm-max}
{\sf (i)}\ 
Excluding the case in which $\Delta$ is of type $\GR{A}{n}$ with $\tap=\ap_1$ or $\ap_n$, we have
\[
   \Pi[\tap]_{max}=(\Pi\cap \gH) \bigsqcup \{ \tilde w^{-1}(\beta) \mid \beta\in \Pi \ \& \ (\beta,\tap)=0\} . 
\]
\indent {\sf (ii)} \ In particular, if $(\theta,\tap)=0$, then 
$\Pi[\tap]_{max}=(\Pi\cap \gH)\sqcup \Pi[\tap]_{min}$;

{\sf (iii)} \ In particular, if $\theta$ is fundamental and $(\theta,\tap)\ne 0$, then  \\ 
\centerline{$\Pi[\tap]_{max}=\Pi[\tap]_{min}=\{\tap\} \sqcup \{\beta\in\Pi\mid (\beta, \tap)=0\}$.}
\end{thm}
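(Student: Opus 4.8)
The plan is to deduce all three parts purely from results already established, with Theorem~\ref{thm:inclusion-min-max2} (relating the complements $\eus S[\tap]_{max}$ and $\eus S[\tap]_{min}$) as the engine and the explicit description of $\Pi[\tap]_{min}$ from Theorem~\ref{thm:norm-mu-min} supplying the closed form. First I would pass to complements. By definition $\Pi[\tap]_{max}=\Pi\setminus\eus S[\tap]_{max}$, and outside the excluded $\GR{A}{n}$ configuration Theorem~\ref{thm:inclusion-min-max2} gives $\eus S[\tap]_{max}=\eus S[\tap]_{min}\cap\theta^\perp$. De Morgan inside the finite set $\Pi$ then yields
\[
   \Pi[\tap]_{max}=\Pi\setminus(\eus S[\tap]_{min}\cap\theta^\perp)=\bigl(\Pi\setminus\eus S[\tap]_{min}\bigr)\cup(\Pi\setminus\theta^\perp)=\Pi[\tap]_{min}\cup(\Pi\cap\gH),
\]
where $\Pi\setminus\theta^\perp=\Pi\cap\gH$ is the set of simple roots not orthogonal to $\theta$.

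Next I would split $\Pi[\tap]_{min}=(\Pi[\tap]_{min}\cap\theta^\perp)\sqcup(\Pi[\tap]_{min}\cap\gH)$. The second summand lies inside $\Pi\cap\gH$ and is therefore absorbed, leaving $\Pi[\tap]_{max}=(\Pi[\tap]_{min}\cap\theta^\perp)\sqcup(\Pi\cap\gH)$; this union is genuinely disjoint because its first member is contained in $\theta^\perp$. Finally Theorem~\ref{thm:norm-mu-min}(i) identifies $\Pi[\tap]_{min}\cap\theta^\perp$ with $\{\tilde w^{-1}(\beta)\mid\beta\in\Pi,\ (\beta,\tap)=0\}$, which is exactly the assertion of part~(i).

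For part~(ii), the hypothesis $(\theta,\tap)=0$ lets me invoke Theorem~\ref{thm:norm-mu-min}(ii), which forces $\Pi[\tap]_{min}\subset\theta^\perp$; hence $\Pi[\tap]_{min}\cap\theta^\perp=\Pi[\tap]_{min}$ and the formula from~(i) collapses to $\Pi[\tap]_{max}=(\Pi\cap\gH)\sqcup\Pi[\tap]_{min}$. I would also note that this case cannot coincide with the excluded $\GR{A}{n}$ configuration, since there $\ap_1,\ap_n\in\gH$, so~(i) does apply. For part~(iii), $\theta$ fundamental rules out $\GR{A}{n}$ and $\GR{C}{n}$, so $\#(\Pi\cap\gH)=1$ and $(\theta,\tap)\ne 0$ forces $\tap=\ap_\theta$; Remark~\ref{rem:except-case} then gives $\ah(\tap)_{min}=\ah(\tap)_{max}$ and hence $\Pi[\tap]_{min}=\Pi[\tap]_{max}$, while Proposition~\ref{thm:norm-min} evaluates $\Pi[\ap_\theta]_{min}=\{\ap_\theta\}\sqcup\{\beta\in\Pi\mid(\beta,\ap_\theta)=0\}$.

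The computation is essentially formal set algebra once Theorem~\ref{thm:inclusion-min-max2} is available; no single step is analytically hard. The places demanding care are the bookkeeping of disjointness — ensuring the $\gH$-part of $\Pi[\tap]_{min}$ is really absorbed and that the surviving union is disjoint — and, in parts~(ii) and~(iii), checking that the hypotheses of each invoked result are met, in particular that the excluded pair $(\GR{A}{n},\ap_1\text{ or }\ap_n)$ is genuinely avoided. I expect this last verification, lining up the case distinctions of Theorem~\ref{thm:norm-mu-min}, Theorem~\ref{thm:inclusion-min-max2} and Remark~\ref{rem:except-case}, to be the main (if mild) obstacle.
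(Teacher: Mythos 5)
Your proposal is correct and is exactly the paper's (implicit) argument: the paper gives no separate proof of Theorem~\ref{thm:norm-max}, stating only that it follows by ``combining Theorems~\ref{thm:norm-mu-min} and \ref{thm:inclusion-min-max2}'', which is precisely the complement-passing computation you carry out, together with Proposition~\ref{thm:norm-min} and Remark~\ref{rem:except-case} for part~(iii). Your verification of disjointness and of the exclusion of the $(\GR{A}{n},\ap_1\text{ or }\ap_n)$ case in parts~(ii) and~(iii) supplies the bookkeeping the paper leaves to the reader.
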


\noindent
Let us say that $\beta\in\Pi$ is {\it admissible\/} (for $\tap$) if $(\beta,\tap)=0$. It follows from 
Theorem~\ref{thm:norm-mu-min} that an admissible root always gives rise to a simple root of the Levi 
subalgebra of $\p[\tap]_{min}$. Furthermore, if $\theta$ is fundamental and $(\tap,\theta)\ne 0$, then
$\tap$ also belongs to $\Pi[\tap]_{min}$.

\begin{ex}
{\bf (1)} \ $\Delta=\GR{A}{n}$, $\tap=\ap_2$. Here 
$\tilde w=s_1s_3\dots s_n$ and the admissible roots are $\ap_4,\dots,\ap_n$. One has
$\tilde w^{-1}(\ap_i)=\ap_{i-1}$ for them. Hence $\Pi[\ap_2]_{min}=\{\ap_3,\ap_4,\dots,\ap_{n-1}\}$
and $\eus S[\ap_2]_{min}=\{\ap_1,\ap_2,\ap_n\}$. Then $\eus S[\ap_2]_{max}=\{\ap_2\}$.

More generally, for $\tap=\ap_i$ ($2\le i\le n-1$), one obtains
$\eus S[\ap_i]_{min}=\{\ap_1,\ap_i,\ap_n\}$ and $\eus S[\ap_i]_{max}=\{\ap_i\}$.

{\bf (2a)} \ $\Delta=\GR{D}{4}$, $\tap=\ap_1$. Here $\tilde w=s_2s_3s_4s_2$ and the admissible 
roots are $\ap_3,\ap_4$. One has $\tilde w^{-1}(\ap_3)=\ap_{4}$ and $\tilde w^{-1}(\ap_4)=\ap_{3}$.
Hence $\eus S[\ap_1]_{min}=\{\ap_1,\ap_2\}$ and $\eus S[\ap_1]_{max}=\{\ap_1\}$.

{\bf (2b)} \ $\Delta=\GR{D}{4}$, $\tap=\ap_2$. There is no admissible roots here, hence $\tilde w$ is 
not really needed. Since $(\ap_2,\theta)\ne 0$, we have 
$\eus S[\ap_2]_{min}=\eus S[\ap_2]_{max}=\{\ap_1,\ap_3,\ap_4\}=\Pi\setminus (\Pi\cap\gH)$.

{\bf (3)} \ $\Delta=\GR{C}{n}$, $\tap=\ap_n$ (the only long simple root). Here $\tilde w=
s_{n-1}\dots s_2s_1$ and the admissible roots are $\ap_1,\dots,\ap_{n-2}$. One has
$\tilde w^{-1}(\ap_i)=\ap_{i+1}$ for them. Hence $\Pi[\ap_n]_{min}=\{\ap_2,\ap_3,\dots,\ap_{n-1}\}$
and $\eus S[\ap_n]_{min}=\{\ap_1,\ap_n\}$. Then $\eus S[\ap_n]_{max}=\{\ap_n\}$.

{\bf (4a)} \ $\Delta=\GR{E}{6}$, $\tap=\ap_3$. Here $\tilde w=
s_6s_4s_2s_5s_3s_1s_2s_4s_3s_6$ and the admissible roots are $\ap_1,\ap_5$. 
One has $\tilde w^{-1}(\ap_1)=\ap_{4}$ and $\tilde w^{-1}(\ap_5)=\ap_{2}$.
Hence $\eus S[\ap_3]_{min}=\{\ap_1,\ap_3,\ap_5, \ap_6\}$ and $\eus S[\ap_3]_{max}=\{\ap_1,\ap_3,\ap_5\}$.

{\bf (4b)} \ $\Delta=\GR{E}{6}$, $\tap=\ap_2$. Here $\tilde w=
s_3s_6s_4s_5s_3s_1s_2s_4s_3s_6$ and the admissible roots are $\ap_4,\ap_5,\ap_6$. One has 
$\tilde w^{-1}(\ap_4)=\ap_{3}$, $\tilde w^{-1}(\ap_5)=\ap_{2}$ and $\tilde w^{-1}(\ap_6)=\ap_{5}$.
Hence $\eus S[\ap_2]_{min}=\{\ap_1,\ap_4, \ap_6\}$ and $\eus S[\ap_2]_{max}=\{\ap_1,\ap_4\}$.
\end{ex}

\section{Normalisers of abelian ideals and $\BZ$-gradings}
\label{sect:udivit} 

\noindent
In this section, we elaborate on a relationship between the abelian ideals, their normalisers and the associated $\BZ$-gradings.
Any subset $S\subset \Pi$ gives rise to a $\BZ$-grading of $\g$.
Set $\deg(\ap)=\begin{cases} 0, & \ap\in \Pi\setminus S\\ 1, & \ap\in S \end{cases}$, 
and extend it to the whole of $\Delta$ by linearity. Then the $\BZ$-grading 
$\g=\bigoplus _{i\in\BZ}\g(i)$ is defined by the requirement that $\te\subset \g(0)$ and
$\g_\gamma\subset \g(\deg(\gamma))$ for any $\gamma\in\Delta$. 
Set $\g({\ge} j)=\bigoplus_{i\ge j} \g(i)$.
If we wish to make the dependance on $S$ explicit, then we write $\g(i;S)$ and $\g({\ge}j; S)$.

Let $\p$ be a standard parabolic subalgebra, $\el$ the standard Levi subalgebra of $\p$, 
and $\Pi(\el)$ the set of simple roots of $\el$. Then $S=S(\p)=\Pi\setminus \Pi(\el)$ determines the
$\BZ$-grading {\it associated with\/} $\p$, and we also write $\p=\p(S)$.
In this case,
$\g(0; S)=\el$, $\g({\ge}0; S)=\p$,  and $\g({\ge}1; S)$ is the nilradical of $\p$.

The {\it height\/} of a $\BZ$-grading 
is the maximal $i$ such that $\g(i)\ne \{0\}$. For $S=\Pi\setminus \Pi(\el)$, we also say that it is the
{\it height of\/} $\p(S)$, denoted $\hot(\p(S))$.
It is easily seen that 
$\hot(\p(S))=\deg(\theta)=\sum_{\ap\in S} [\theta:\ap]$.
Clearly, if $j\ge [\hot(\p)/2]+1$, then $\g({\ge} j)$ is an abelian ideal of $\be$.

{\bf Convention.} If $(\theta,\tap)\ne 0$, then $I(\tap)_{min}=I(\tap)_{max}$. In this case, we omit
the subscripts `min' and `max' from the notation for all relevant objects; that is, we merely write
$\p[\tap]$, $\eus S[\tap]$, etc. 

\begin{thm}   \label{thm:tap-explicit}
Suppose that $\theta$ is fundamental, with the corresponding $\ap_\theta\in\Pi$. 
\begin{itemize}
\item[\sf (i)] \ $\eus S[\ap_\theta]=\{\beta\in \Pi\setminus \{\ap_\theta\} \mid (\beta,\ap_\theta)\ne 0\}$, the set of all 
simple roots adjacent to $\ap_\theta$;
\item[\sf (ii)] \ $\ap_\theta$ is long, $[\theta:\ap_\theta]=2$, and \ $\hot(\p[\ap_\theta])=3$;
\item[\sf (iii)] \  $\ah(\ap_\theta)=\g({\ge}2; \eus S[\ap_\theta])$.
\end{itemize}
\end{thm}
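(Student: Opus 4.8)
The plan is to settle (i) and (ii) quickly and then concentrate on (iii), which ultimately amounts to the set-theoretic identity $I(\ap_\theta)=\{\gamma\in\Delta^+:\deg(\gamma;\eus S[\ap_\theta])\ge 2\}$. Part (i) is nothing but a reformulation of Proposition~\ref{thm:norm-min}: as $\eus S[\ap_\theta]=\Pi\setminus\Pi[\ap_\theta]_{min}$, that proposition says precisely that $\eus S[\ap_\theta]$ is the set of simple roots adjacent to $\ap_\theta$. For (ii) I would work inside the finite root system. Fundamentality of $\theta$ means $(\theta,\ap_j^\vee)=\delta_{j,i_0}$ with $\ap_{i_0}=\ap_\theta$, so $(\theta,\ap_\theta^\vee)=1$, i.e. $2(\theta,\ap_\theta)=(\ap_\theta,\ap_\theta)$, whence $(\ap_\theta,\theta^\vee)=\|\ap_\theta\|^2/\|\theta\|^2$. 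This is a nonnegative integer that is positive (since $(\theta,\ap_\theta)>0$) and $\le 1$ (as $\theta$ is a longest root), so it equals $1$ and $\ap_\theta$ is long. Consequently $(\gamma,\theta^\vee)=\sum_i[\gamma:\ap_i]\,(\ap_i,\theta^\vee)=[\gamma:\ap_\theta]$ for every $\gamma\in\Delta^+$, and $\gamma=\theta$ yields $[\theta:\ap_\theta]=(\theta,\theta^\vee)=2$. Finally, expanding $1=(\theta,\ap_\theta^\vee)=2[\theta:\ap_\theta]-\sum_{\beta\sim\ap_\theta}[\theta:\beta]$ (using $(\ap_\theta,\ap_\theta^\vee)=2$ and $(\beta,\ap_\theta^\vee)=-1$ for each neighbour $\beta$, the latter because $\ap_\theta$ is long) gives $\sum_{\beta\sim\ap_\theta}[\theta:\beta]=3$, which by (i) and $\hot(\p(S))=\sum_{\ap\in S}[\theta:\ap]$ is exactly $\hot(\p[\ap_\theta])=3$.

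For (iii), write $S=\eus S[\ap_\theta]$ and $\deg=\deg(\,\cdot\,;S)$. Since $\hot(\p[\ap_\theta])=3$ and $2\ge[3/2]+1$, the space $\g({\ge}2;S)$ is already an abelian ideal, so it suffices to prove the equality of root sets $I(\ap_\theta)=\{\gamma\in\Delta^+:\deg\gamma\ge 2\}$. For ``$\subseteq$'' it is enough, by monotonicity of $\deg$ along ``$\curle$'', to check that each $\gamma_0\in\min(I(\ap_\theta))$ has $\deg\gamma_0\ge 2$. Suppose not. As $\ah(\ap_\theta)$ is root-minimal, $\gamma_0\in\gH$, so $[\gamma_0:\ap_\theta]=(\gamma_0,\theta^\vee)\in\{1,2\}$; the value $2$ forces $\gamma_0=\theta$ (hence $\deg\gamma_0=3$), so $[\gamma_0:\ap_\theta]=1$. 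By Theorem~\ref{thm:old-normalise} applied to $\gamma_0$, every simple $\ap$ with $\gamma_0-\ap\in\Delta^+\cup\{0\}$ satisfies $\g_{-\ap}\notin\p[\ap_\theta]$, i.e. $\ap\in S$; since $\ap_\theta\notin S$ this rules out $\gamma_0=\ap_\theta$ and any descent through $\ap_\theta$. Connectedness of $\supp\gamma_0$ (which contains $\ap_\theta$ and at least one more node) then forces $\deg\gamma_0=1$ with a single neighbour $\beta_0$ of $\ap_\theta$ present, so $\ap_\theta$ is a leaf of $\supp\gamma_0$ joined only to $\beta_0$ and any descent must go through $\beta_0$. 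Peeling it off, $\gamma_0-\beta_0$ is a positive root with $[\,\cdot\,:\ap_\theta]=1$ and $\deg=0$, so its support is $\{\ap_\theta\}$ and $\gamma_0-\beta_0=\ap_\theta$. Thus $\gamma_0=\ap_\theta+\beta_0$ and $\gamma_0-\ap_\theta=\beta_0\in\Delta^+$, so Theorem~\ref{thm:old-normalise} again gives $\g_{-\ap_\theta}\notin\p[\ap_\theta]$, contradicting (i).

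The reverse inclusion I would obtain dually. Given $\gamma\in\Delta^+\setminus I(\ap_\theta)$, pick $\gamma'\in\max(\Delta^+\setminus I(\ap_\theta))$ with $\gamma'\curge\gamma$, so $\deg\gamma'\ge\deg\gamma$. Because $\ap_\theta\in\Pi_l$ and $I(\ap_\theta)_{min}=I(\ap_\theta)_{max}$, Proposition~\ref{prop:svyaz-JEMS} gives $\theta-\gamma'\in\min(I(\ap_\theta))$; the inclusion just proved then forces $\deg(\theta-\gamma')\ge 2$. But $\deg(\theta-\gamma')=\deg\theta-\deg\gamma'=3-\deg\gamma'$, so $\deg\gamma'\le 1$ and hence $\deg\gamma\le 1$. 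Therefore no root outside $I(\ap_\theta)$ has $\deg\ge 2$, i.e. $\{\deg\ge 2\}\subseteq I(\ap_\theta)$, completing (iii).

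I expect the one genuinely delicate point to be the ``$\subseteq$'' inclusion in (iii): converting the purely existential criterion of Theorem~\ref{thm:old-normalise} into actual control over the minimal roots requires the support/connectedness bookkeeping above, and this step essentially relies on the special feature $[\theta:\ap_\theta]=2$ from (ii) (which is what makes $\theta$ the unique root with $[\gamma:\ap_\theta]=2$, pinning down the case $\gamma_0\in\gH$). By contrast, once Proposition~\ref{prop:svyaz-JEMS} and the first inclusion are available, the reverse inclusion is a one-line contradiction, and parts (i)--(ii) are routine root-system facts.
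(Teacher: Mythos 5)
Your proposal is correct, and while your parts (i)--(ii) coincide with the paper's proof (part (i) is exactly the appeal to Proposition~\ref{thm:norm-min}, and part (ii) is the same chain of Cartan-integer computations), your part (iii) takes a genuinely different route. The paper obtains the inclusion $I(\ap_\theta)\subseteq\{\gamma\in\Delta^+ : \deg\gamma\ge 2\}$ from the explicit formula for the minimal roots, $\min(I(\ap_\theta))=\{w_{\ap_\theta}^{-1}(\ap_\theta+\beta_i)\}=\{\theta+w_{\ap_\theta}^{-1}(\beta_i)\}$ from \cite[Prop.\,4.6]{imrn}, computing $(\nu_i,\theta^\vee)=1$ and, via Lemma~\ref{lm:vspomogat2}, $(\nu_i,\ap_\theta^\vee)=0$, so that each minimal root has degree exactly $2$; the equality then follows in one line because $\g({\ge}2;\eus S[\ap_\theta])$ is an abelian ideal and $\ah(\ap_\theta)$ is a \emph{maximal} abelian ideal. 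You instead bound the degree of an arbitrary minimal root from below with no formula for it: Theorem~\ref{thm:old-normalise} forces every simple descent of a minimal root to lie in $\eus S[\ap_\theta]$, while $\ap_\theta\notin\eus S[\ap_\theta]$ by (i), and combining this with root-minimality ($I(\ap_\theta)\subset\gH$, which pins down $[\gamma_0:\ap_\theta]=1$) and connectedness of root supports you exclude degrees $0$ and $1$; for the reverse inclusion you replace the maximality argument by the duality of Proposition~\ref{prop:svyaz-JEMS} together with $\deg(\theta-\gamma')=3-\deg\gamma'$. Both proofs are complete and rest only on results already quoted in the paper, so the difference is one of machinery: the paper's computation also identifies the minimal roots and their coefficients explicitly, which is reused later (the proof of Theorem~\ref{thm:svoistva-f2}(ii) cites precisely these facts), whereas your argument is more elementary and avoids the minuscule-element calculus entirely (no \cite[Prop.\,4.6]{imrn}, no Lemma~\ref{lm:vspomogat2}) but recovers the degree information only implicitly. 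Note also that once your first inclusion is established, you could finish exactly as the paper does --- by maximality of $\ah(\ap_\theta)$ among abelian ideals --- which is slightly shorter than your appeal to Proposition~\ref{prop:svyaz-JEMS}, though your version has the virtue of exhibiting the complement $\Delta^+\setminus I(\ap_\theta)$ as $\{\gamma : \deg\gamma\le 1\}$ directly.
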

\begin{proof}
 (i)   It is already proved in Proposition~\ref{thm:norm-min}.
 \\
 (ii)  If $\theta$ is fundamental, then $(\theta,\ap_\theta^\vee)=1=(\ap_\theta,\theta^\vee)$. Hence $\ap_\theta$ is necessarily long. Furthermore, 
 \[
    (\theta,\theta)=(\theta,\sum_{\ap\in\Pi} [\theta:\ap]\ap)=[\theta:\ap_\theta](\theta,\ap_\theta)=\frac{1}{2}[\theta:\ap_\theta](\theta,\theta) .
 \]
 Hence $[\theta:\ap_\theta]=2$. Finally,
 \[
   1=(\theta,\ap_\theta^\vee)=2[\theta:\ap_\theta]-\sum_{\beta \ \text{adjacent}}[\theta:\beta] ,
 \] 
where the sum ranges over the simple roots $\beta$ adjacent to $\ap_\theta$ in the Dynkin diagram.
Therefore,  $3=\sum_{\beta \ \text{adjacent}}[\theta:\beta]=\hot(\p[\ap_\theta])$. 

(iii) \  A general description of the minimal roots for all root-minimal ideals $\ah(\mu)_{min}$ is provided 
in~\cite[Prop.\,4.6]{imrn}. In the situation with $\mu=\ap_\theta$, this yields
\[
  \min(I(\ap_\theta))= 
  \{ w_{\ap_\theta}^{-1}(\ap_\theta+\beta_i)\mid \beta_i\in\Pi \ \& \ \beta_i \text{ is adjacent to }\ \ap_\theta \} . 
\]
Set $\nu_i=w_{\ap_\theta}^{-1}(\ap_\theta+\beta_i)= \theta+w_{\ap_\theta}^{-1}(\beta_i)$ and write
$\nu_i=m\ap_\theta+ \sum_j m_j\beta_j + \text{(others)}$. Then $m=1$, since
$m=(\nu_i,\theta^\vee)=(\theta+w_{\ap_\theta}^{-1}(\beta_i),\theta^\vee)=2-1=1$. Next, using 
Lemma~\ref{lm:vspomogat2} with $\mu=\ap_\theta$, we obtain
\[
   (\nu_i,\ap_\theta^\vee)=(\theta+w_{\ap_\theta}^{-1}(\beta_i),\ap_\theta^\vee)=1+(\beta_i, \ap_\theta^\vee-\theta^\vee)=1-1=0 .
\]
On the other hand,
\[
   (\nu_i,\ap_\theta^\vee)=2m-\sum_j m_j .
\]   
Therefore, $\sum_j m_j=2$ and all minimal roots belong to $\g(2; \eus S[\ap_\theta])$. Since 
$\g({\ge}2; \eus S[\ap_\theta])$ is an 
abelian ideal and $\ah(\ap_\theta)$ is maximal abelian, we must have $\g({\ge}2; \eus S[\ap_\theta])=\ah(\ap_\theta)$.
\end{proof}

Theorem~\ref{thm:tap-explicit} is a particular case of the following general assertion.

\begin{thm}   \label{thm:udivit1}  \leavevmode\par
\begin{itemize}
\item[\sf (i)] \
For any $\tap\in\Pi_l$ and $n_\tap:=[\theta:\tap]$, we have
$\hot(\p[\tap]_{max})=2n_\tap-1$ and $\ah(\tap)_{max}=\g({\ge}n_\tap; \eus S[\tap]_{max})$.
\item[\sf (ii)] \
If $(\tap,\theta)=0$ (and hence $\eus S[\tap]_{max}\ne \eus S[\tap]_{min}$), then
$\hot(\p[\tap]_{min})=2n_\tap+1$ and $\ah(\tap)_{min}=\g({\ge}n_\tap+1;\eus S[\tap]_{min})$.
\end{itemize} 
\end{thm}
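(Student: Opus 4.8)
The plan is to establish (i) and then deduce (ii) from it. Throughout set $S_{\max}=\eus S[\tap]_{max}$, $S_{\min}=\eus S[\tap]_{min}$ and abbreviate $\deg_{\max}(\gamma)=\deg(\gamma;S_{\max})$, $\deg_{\min}(\gamma)=\deg(\gamma;S_{\min})$, so that $\hot(\p[\tap]_{max})=\deg_{\max}(\theta)$ and likewise for $\min$. If $(\tap,\theta)\ne 0$ then $I(\tap)_{min}=I(\tap)_{max}$: off type $\GR{A}{n}$ this is Theorem~\ref{thm:tap-explicit} (there $n_\tap=2$, $\hot=3$), while the exceptional roots $\ap_1,\ap_n$ in type $\GR{A}{n}$ have $n_\tap=1$, $S_{\max}=\{\tap\}$ and $\ah(\tap)=\g({\ge}1;\{\tap\})$ by a direct check. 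So the substance is the case $(\tap,\theta)=0$, in which Theorem~\ref{thm:norm-max} gives $S_{\min}=S_{\max}\sqcup(\Pi\cap\gH)$ and hence $\deg_{\min}(\gamma)=\deg_{\max}(\gamma)+\sum_{\ap\in\Pi\cap\gH}[\gamma:\ap]$ for every $\gamma$. Because $\sum_{\ap\in\Pi\cap\gH}[\theta:\ap]=2$ (it is $[\theta:\ap_\theta]=2$ when $\theta$ is fundamental, and $[\theta:\ap_1]+[\theta:\ap_n]=2$ in type $\GR{A}{n}$), the two heights differ by $2$; thus the height statements of (i) and (ii) are equivalent and it suffices to prove $\deg_{\max}(\theta)=2n_\tap-1$.

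For (i) the core is a degree computation for the minimal roots of $I(\tap)_{min}$, modelled on the proof of Theorem~\ref{thm:tap-explicit}(iii). By \cite[Prop.\,4.6]{imrn} each such minimal root has the form $\gamma=w_{\tap}^{-1}(\tap+\beta)=\theta+w_{\tap}^{-1}(\beta)$ with $\beta\in\Pi$ adjacent to $\tap$. Assuming $\theta$ fundamental (the series $\GR{A}{n}$, $\GR{C}{n}$ being handled by direct calculation, as in Theorem~\ref{thm:norm-mu-min}), one first gets $[\gamma:\ap_\theta]=(\gamma,\theta^\vee)=2+(\beta,\tap^\vee)=1$, since $\tap$ is long and $\beta$ is adjacent; using Lemma~\ref{lm:vspomogat1} and the explicit $S_{\max}$ of Theorem~\ref{thm:norm-max} one then evaluates $\deg_{\max}(\gamma)=n_\tap$. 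By Proposition~\ref{prop:svyaz-JEMS} the roots $\theta-\gamma$ run over $\max(\Delta^+\setminus I(\tap)_{max})$, so $\deg_{\max}(\theta-\gamma)=n_\tap-1$, and adding gives $\deg_{\max}(\theta)=2n_\tap-1$. In particular $\g({\ge}n_\tap;S_{\max})$ is an abelian ideal; and since each root of $\Delta^+\setminus I(\tap)_{max}$ is $\curle$ some $\theta-\gamma$, monotonicity of the degree yields $\g({\ge}n_\tap;S_{\max})\subseteq\ah(\tap)_{max}$.

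The reverse inclusion $\ah(\tap)_{max}\subseteq\g({\ge}n_\tap;S_{\max})$ is the delicate step. Splitting $I(\tap)_{max}=I(\tap)_{min}\sqcup(I(\tap)_{max}\cap\theta^\perp)$ by \cite[Prop.\,3.2]{jems}, every root of $I(\tap)_{min}$ dominates one of the minimal roots $\gamma$ above, so $\deg_{\max}\ge n_\tap$ there. For $\mu\in I(\tap)_{max}\cap\theta^\perp$ I use that $\ap_\theta\notin S_{\max}$, whence $\g_{-\ap_\theta}$ normalises $\ah(\tap)_{max}$; this forces $\mu-\ap_\theta\notin\Delta$, and if $\mu+\ap_\theta\in\Delta$ then $\mu+\ap_\theta\in\gH\cap I(\tap)_{max}=I(\tap)_{min}$, giving $\deg_{\max}(\mu)=\deg_{\max}(\mu+\ap_\theta)\ge n_\tap$ (as $\deg_{\max}(\ap_\theta)=0$). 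I expect the genuine obstacle to be precisely the remaining possibility, a minimal root of $I(\tap)_{max}$ orthogonal to both $\theta$ and $\ap_\theta$ (so that neither $\mu\pm\ap_\theta$ is a root); ruling this out, equivalently proving $\deg_{\max}\le n_\tap$ on $\theta^\perp$ (and $=n_\tap$ on $I(\tap)_{max}\cap\theta^\perp$), appears to require the explicit form of $w_{\tap}$ from \cite{imrn} or a short type-by-type verification. Granting it, the two inclusions give (i).

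Finally I deduce (ii). The height is already settled, $\hot(\p[\tap]_{min})=\hot(\p[\tap]_{max})+2=2n_\tap+1$, so the relevant cut-off is $n_\tap+1$. For the ideal, note that for $\mu\ne\theta$ one has $[\mu:\ap_\theta]=(\mu,\theta^\vee)\in\{0,1\}$, so the relation $\deg_{\min}=\deg_{\max}+[\,\cdot\,:\ap_\theta]$ together with $I(\tap)_{min}=I(\tap)_{max}\cap\gH=\{\deg_{\max}\ge n_\tap\}\cap\gH$ identifies $I(\tap)_{min}$ with $\g({\ge}n_\tap+1;S_{\min})$: the inclusion $\subseteq$ is immediate, and conversely a root $\mu$ with $\deg_{\min}(\mu)\ge n_\tap+1$ cannot be orthogonal to $\theta$ (otherwise $\deg_{\max}(\mu)\ge n_\tap+1$, impossible by the bound on $\theta^\perp$), so $[\mu:\ap_\theta]=1$, $\deg_{\max}(\mu)\ge n_\tap$ and $\mu\in I(\tap)_{max}\cap\gH=I(\tap)_{min}$. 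In types $\GR{A}{n}$ and $\GR{C}{n}$ the same conclusions follow by direct calculation. This proves (ii).
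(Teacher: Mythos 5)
Your proposal does not close the argument: it contains one logical flaw and one gap that you yourself leave open, and part (ii) inherits both. The flaw is in the height computation for (i). You assert $\deg_{\max}(\gamma)=n_\tap$ for $\gamma\in\min(I(\tap)_{min})$ with only the phrase ``one then evaluates'' --- no uniform evaluation is given, and it is not clear that Lemma~\ref{lm:vspomogat1} and Theorem~\ref{thm:norm-max} suffice, since one needs the actual coefficients of $\gamma$ along $\eus S[\tap]_{max}$, which is type-specific data. You then claim $\deg_{\max}(\theta-\gamma)=n_\tap-1$ ``by Proposition~\ref{prop:svyaz-JEMS}''; this is a non sequitur, because membership in $\max(\Delta^+\setminus I(\tap)_{max})$ does not determine the degree of a root --- that is precisely what has to be proved. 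Since $\deg_{\max}(\gamma)+\deg_{\max}(\theta-\gamma)=\deg_{\max}(\theta)$ identically, asserting these two degree values together is logically equivalent to asserting the height formula $\hot(\p[\tap]_{max})=2n_\tap-1$; as written, the step ``adding gives $\deg_{\max}(\theta)=2n_\tap-1$'' is circular. Everything downstream rests on this: the inclusion $\g({\ge}n_\tap;\eus S[\tap]_{max})\subseteq\ah(\tap)_{max}$ uses the unproved value of $\deg_{\max}(\theta-\gamma)$, and the deduction of (ii) invokes ``the bound on $\theta^\perp$'', which is also unproved.

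The second gap you name yourself: the reverse inclusion $\ah(\tap)_{max}\subseteq\g({\ge}n_\tap;\eus S[\tap]_{max})$, equivalently the bound $\deg_{\max}\le n_\tap$ on $\theta^\perp$, is deferred to ``a short type-by-type verification'', and the theorem is obtained only ``granting it''. A conditional proof is not a proof. For comparison, the paper makes no pretense of a uniform argument: its proof of this theorem is openly a case-by-case verification, determining $\min(I(\tap)_{min})$ via \cite[Prop.\,4.6]{imrn} and $\min(I(\tap)_{max})$ via \cite[Tables~I,II]{pr}, and checking that these coincide with the minimal roots of $\g({\ge}n_\tap{+}1;\eus S[\tap]_{min})$ and $\g({\ge}n_\tap;\eus S[\tap]_{max})$ respectively. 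So your instinct that type-dependent verification is unavoidable agrees with the paper; but then your write-up must actually carry out such verifications for \emph{all three} missing ingredients --- the value $\deg_{\max}(\gamma)=n_\tap$, the value $\deg_{\max}(\theta-\gamma)=n_\tap-1$, and the bound on $\theta^\perp$ --- rather than assert the first two and grant the third, since the uniform skeleton built on them is otherwise circular. A minor additional slip: in your first paragraph, type $\GR{C}{n}$ (whose unique long simple root satisfies $(\tap,\theta)\ne0$) is attributed to Theorem~\ref{thm:tap-explicit}, which does not apply there because $\theta$ is not fundamental; you implicitly correct this later by relegating $\GR{C}{n}$ to direct calculation.
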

\begin{proof}
Our proof for both parts consists of a case-by-case verification. Using explicit information on 
$\min(I(\tap)_{min})$ and $\min(I(\tap)_{max})$ or results of Section~\ref{sect:rela}, we explicitly 
determine $\eus S[\tap]_{min}$ and $\eus S[\tap]_{max}$. This yields the associated $\BZ$-gradings and 
height of all parabolics involved. 
The minimal roots of $I(\tap)_{min}$ can be determined with the help of \cite[Prop.\,4.6]{imrn},
whereas the minimal roots of $I(\tap)_{max}$ ("generators") are indicated in \cite[Tables~I,II]{pr}.
Then one verifies that the sets $\min(I(\tap)_{min})$ and $\min(I(\tap)_{max})$ always 
coincide with the set of minimal roots of $\g({\ge}n_\tap+1;\eus S[\tap]_{min})$ and
$\g({\ge}n_\tap;\eus S[\tap]_{max})$, respectively.
\end{proof}

\begin{rmk}
We can directly explain the following outcome of Theorem~\ref{thm:udivit1}:
\\[.7ex]
\centerline{\it 
If $(\tap,\theta)=0$, then $\hot(\p[\tap]_{min})=\hot(\p[\tap]_{max})+2$.}

\noindent 
For, by Theorem~\ref{thm:norm-max}(ii), we know that $\eus S[\tap]_{min}=(\Pi\cap\gH)\cup \eus S[\tap]_{max}$. Hence
\[
\hot(\p[\tap]_{min})-\hot(\p[\tap]_{max})=\sum_{\beta\in \Pi\cap\gH} n_\beta .
\]
If $\theta$ is fundamental, then $\Pi\cap\gH=\{\ap_\theta\}$ and $n_{\ap_\theta}=2$ 
(Theorem~\ref{thm:tap-explicit}(ii)). For $\GR{A}{n}$, we have $\Pi\cap\gH=\{\ap_1,\ap_n\}$ and  
$n_{\ap_1}+n_{\ap_n}=2$.
This does not apply to $\GR{C}{n}$, where $(\tap,\theta)\ne 0$ for the unique long simple root $\tap$.
\end{rmk}

\begin{ex}   \label{ex:n-tap=1}
If $n_\tap=1$, then  $I(\tap)_{max}=\{\gamma\in\Delta^+\mid [\gamma:\tap]=1\}$ and 
$\p[\tap]_{max}$ is the maximal parabolic subalgebra with $\eus S[\tap]_{max}=\{\tap\}$. Here 
$\hot(\p[\tap]_{max})=1$. Hence 
Theorem~\ref{thm:udivit1}(i) is satisfied here. Furthermore, if $\theta$ is fundamental and
$(\theta,\ap_\theta)\ne 0$, then $\tap\ne\ap_\theta$ (because $n_{\ap_\theta}=2$), 
$(\theta,\tap)=0$, and $\eus S[\tap]_{min}=\{\tap,\ap_\theta\}$, see Theorem~\ref{thm:norm-max}(ii).
Therefore $\hot(\p[\tap]_{min})=3$, and I can prove {\sl a priori\/} that 
$\ah(\tap)_{min}=\g({\ge}2; \{\tap,\ap_\theta\})$. (As this is not a decisive step, the proof is omitted.)

That is, in principle, there is a better proof of Theorem~\ref{thm:udivit1} if $n_\tap=1$ or  $\tap=\ap_\theta$.
\end{ex}


Now, we consider arbitrary abelian ideals of $\be$. 
Let $\mathfrak{Par}(\g,\be)=\mathfrak{Par}(\g)$ be the set of all standard parabolic subalgebras of 
$\g$. If $\ah\in\Ab(\g)$, then $\n_\g(\ah)\in \mathfrak{Par}(\g)$. It is proved in \cite{pr} that the assignment 
$\ah\mapsto f_1(\ah)=\n_\g(\ah)$ sets up a bijection 
$\Ab(\g)\stackrel{f_1}{\longrightarrow} \mathfrak{Par}(\g)$ if and only if $\Delta$ is of type $\GR{A}{n}$ or $\GR{C}{n}$ (i.e., $\theta$ is not fundamental). 

Here we extend that observation by looking at a natural mapping in the opposite direction. 
For $\p\in \mathfrak{Par}(\g)$ and the associated $\BZ$-grading, we set
\[
   f_2(\p)=\g({\ge} [\hot(\p)/2]+1) \in \Ab(\g) .
\]
This mapping occurs implicitly in Theorem~\ref{thm:udivit1}, 
where $\hot(\p)$ appears to be always odd.

\begin{thm}  \label{thm:svoistva-f2}   \leavevmode\par
\begin{itemize}
\item[\sf (i)] \ If $\Delta$ is of type $\GR{A}{n}$ or\/ $\GR{C}{n}$, then $f_2:\mathfrak{Par}(\g)\to \Ab(\g)$ is a bijection. Moreover,
$f_2=f_1^{-1}$;
\item[\sf (ii)] \ If $\theta$ is fundamental, then $f_2$ is {\bf not} a bijection. In fact, there is a uniform construction 
of two different $\p_1,\p_2\in \mathfrak{Par}(\g)$ such that $f_2(\p_1)= f_2(\p_2)$.
\end{itemize}
\end{thm}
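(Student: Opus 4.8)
The plan is to prove the two parts separately, exploiting the fact that the value $[\hot(\p)/2]+1$ governing $f_2$ behaves very differently depending on whether $\theta$ is fundamental. Throughout I work with the $\BZ$-grading attached to $S=S(\p)=\Pi\setminus\Pi(\el)$ and use $\hot(\p(S))=\sum_{\ap\in S}[\theta:\ap]$. The key numerical input is that when $\theta$ is \emph{not} fundamental (types $\GR{A}{n}$ and $\GR{C}{n}$), every simple root $\ap$ has $[\theta:\ap]\le 1$ for $\GR{A}{n}$ and the long root $\ap_n$ has $[\theta:\ap_n]=1$ for $\GR{C}{n}$ with the short ones contributing coefficient $2$; the crucial feature is that $f_1$ is already known to be a bijection in exactly these two cases by~\cite{pr}.

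For part~{\sf (i)}, I would first recall from~\cite{pr} that $f_1:\Ab(\g)\to\mathfrak{Par}(\g)$ is a bijection precisely for $\GR{A}{n}$ and $\GR{C}{n}$. Since $\#\Ab(\g)=\#\mathfrak{Par}(\g)=2^{\rk\g}$ by Peterson's theorem, to prove $f_2$ is a bijection with $f_2=f_1^{-1}$ it suffices to show the single identity $f_2\circ f_1=\mathrm{id}_{\Ab(\g)}$; a one-sided inverse between finite equipotent sets is automatically a two-sided bijection. So I would fix $\ah\in\Ab(\g)$, set $\p=f_1(\ah)=\n_\g(\ah)$, and verify that $\g({\ge}[\hot(\p)/2]+1)=\ah$. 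The mechanism is that for these two series the whole nilradical structure is controlled by the single ``small'' coefficients, so the abelian ideal $\ah$ sits exactly at the graded levels above the halfway point of its own normaliser. Concretely, I would use the description of normalisers of $\be$-stable subspaces via minimal roots (Theorem~\ref{thm:old-normalise}) together with the explicit combinatorics of $\Ab(\g)$ for $\GR{A}{n}$ and $\GR{C}{n}$: the root-minimal and root-maximal ideals coincide with all abelian ideals here, and Theorem~\ref{thm:udivit1} already shows $\ah(\tap)_{max}=\g({\ge}n_\tap;\eus S[\tap]_{max})$ with $\hot=2n_\tap-1$, giving $[\hot/2]+1=n_\tap$ exactly. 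Extending this from the parametrising long-root ideals to all abelian ideals is the routine-but-necessary case check.

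For part~{\sf (ii)}, when $\theta$ is fundamental I must exhibit two distinct parabolics with the same $f_2$-image, which forces $f_2$ to fail injectivity and hence fail to be a bijection. The natural candidate uses $\ap_\theta$, the unique simple root nonorthogonal to $\theta$, for which $[\theta:\ap_\theta]=2$ by Theorem~\ref{thm:tap-explicit}(ii). The idea is that adjoining or deleting a simple root $\beta$ with $[\theta:\beta]=1$ to the subset $S$ can change $\hot(\p(S))$ by only $1$, which need not change the value of $[\hot/2]+1$; thus two parabolics differing by such a root can land on the same abelian ideal. I would take $\p_1=\p[\ap_\theta]=\g({\ge}0;\eus S[\ap_\theta])$ with $\hot=3$ and $f_2(\p_1)=\g({\ge}2;\eus S[\ap_\theta])=\ah(\ap_\theta)$ by Theorem~\ref{thm:tap-explicit}(iii), and then construct $\p_2$ by modifying $\eus S[\ap_\theta]$ with one coefficient-one simple root so that $\hot(\p_2)$ becomes $4$ while $f_2(\p_2)=\g({\ge}3;S_2)$ still equals $\ah(\ap_\theta)$, or symmetrically adjust to $\hot=2$ to keep the cutoff at the same graded piece. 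The uniform construction should be: start from the grading of $\p_1$ and toggle one admissible simple root contributing $[\theta:\cdot]=1$, verifying that the new halfway cutoff $[\hot/2]+1$ selects the identical collection of graded levels.

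The main obstacle will be part~{\sf (ii)}: producing a genuinely \emph{uniform} pair $(\p_1,\p_2)$ valid across all fundamental-$\theta$ types ($\GR{B}{n},\GR{D}{n},\GR{E}{6,7,8},\GR{F}{4},\GR{G}{2}$) rather than a case-by-case list, and checking that the two gradings really do produce the same $f_2$-image. The delicate point is controlling which graded levels survive the $[\hot/2]+1$ cutoff after the toggle, since changing $\hot$ by one alters the floor function's output only for odd-to-even (or even-to-odd) transitions; I must choose the toggled root so that the cutoff integer stays put while the grading genuinely differs, i.e.\ so $\p_1\ne\p_2$ as parabolics even though their high-degree pieces agree. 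Verifying that such a root always exists when $\theta$ is fundamental — equivalently that there is enough ``room'' in the Dynkin diagram around $\ap_\theta$ — is where the real work lies, and I expect to lean on the height computation $\hot(\p[\ap_\theta])=3$ from Theorem~\ref{thm:tap-explicit}(ii) as the anchor for the construction.
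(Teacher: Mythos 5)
Your proposal has genuine gaps in both parts. For part {\sf (i)}, the reduction to proving $f_2\circ f_1=\mathrm{id}_{\Ab(\g)}$ is legitimate (a one-sided inverse between finite equipotent sets is automatically a bijection; the paper in fact verifies the other composition, $f_1(f_2(\p))=\p$ for all $\p$). But the substance you offer for the verification rests on a false claim: it is \emph{not} true that in types $\GR{A}{n}$ and $\GR{C}{n}$ ``the root-minimal and root-maximal ideals coincide with all abelian ideals''. There are $2^n$ abelian ideals but only $O(n^2)$ long positive roots, so for $n\ge 4$ most abelian ideals are neither root-minimal nor root-maximal (already in $\GR{A}{4}$ the seven roots of $\gH$ give singleton fibres, forcing some fibre $\Ab_\mu$ with $(\mu,\theta)=0$ to have at least three elements). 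Consequently Theorem~\ref{thm:udivit1}, which treats only $\ah(\tap)_{min}$ and $\ah(\tap)_{max}$ for $\tap\in\Pi_l$, cannot be ``extended'' to all ideals the way you suggest; the deferred ``routine case check'' is exactly the content of the paper's proof, namely the explicit parametrisation of $\Ab(\slno)$ by subsets of $[n]$ via minimal roots $\gamma_t=\ap_{i_t}+\dots+\ap_{j_t}$, the computation of the minimal roots of $\g({\ge}k;S)$ for $\#S=2k-1$ or $2k$, and the check that these generate an ideal whose normaliser is $\p(S)$ (with a folded, symmetric version of the same argument for $\GR{C}{n}$).

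For part {\sf (ii)}, your mechanism --- toggle one simple root $\beta$ with $[\theta:\beta]=1$ so that the cutoff $[\hot(\p)/2]+1$ stays put --- fails for two reasons. First, such a root is often unavailable: by Theorem~\ref{thm:tap-explicit}, $\eus S[\ap_\theta]$ consists of the roots adjacent to $\ap_\theta$ and their $\theta$-coefficients sum to $3$; in $\GR{E}{6}$, $\GR{E}{7}$, $\GR{E}{8}$, $\GR{F}{4}$, $\GR{G}{2}$ this set is a single root of coefficient $3$, and in $\GR{B}{n}$ the unique coefficient-one simple root $\ap_1$ already lies in $\eus S[\ap_\theta]$, so there is nothing to adjoin. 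Second, and more fundamentally, changing $S$ changes the degree of \emph{every} root, not just the height, so keeping the cutoff integer fixed does not keep the subspace $\g({\ge}\,\text{cutoff};S)$ fixed: in $\GR{B}{3}$, removing $\ap_1$ from $S_1=\{\ap_1,\ap_3\}$ keeps the cutoff at $2$, but $\g({\ge}2;\{\ap_3\})$ loses the root $\esi_1=\ap_1+\ap_2+\ap_3$ that lies in $\ah(\ap_\theta)=\g({\ge}2;S_1)$, so the two $f_2$-images differ. The paper's uniform construction is different and avoids both problems: take $S_1=\eus S[\ap_\theta]$ and $S_2=S_1\cup\{\ap_\theta\}$. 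Adjoining $\ap_\theta$ (coefficient $2$) raises the height from $3$ to $5$, hence the cutoff from $2$ to $3$, and this shift is exactly compensated because every minimal root $\nu$ of $I(\ap_\theta)$ satisfies $[\nu:\ap_\theta]=1$ and has $S_1$-degree $2$, hence $S_2$-degree exactly $3$; maximality of the abelian ideal $\ah(\ap_\theta)$ then forces $\g({\ge}3;S_2)=\ah(\ap_\theta)=f_2(\p(S_1))$. Your proposal never identifies this compensation mechanism, which is the actual key to a type-free argument.
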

\begin{proof}
(i) First, we recall the (slightly modified) construction of the bijection $f_1$ for $\GR{A}{n}$.
For $\ah\in\Ab(\slno)$, let $\min(I_\ah)=\{\gamma_1,\dots,\gamma_k\}$ with
$\gamma_t=\ap_{i_t}+\ap_{i_{t+1}}+\dots +\ap_{j_t}$, where $i_t\le j_t$. Assuming that 
$i_1\le i_2\le \dots\le i_k$, we actually obtain the restrictions 
\[
   1\le i_1< i_2 <\dots < i_k\le j_1<\dots < j_k\le n 
\]
and thereby the bijection between $\Ab(\slno)$ and the subsets of $[n]=\{1,\dots,n\}$. Here one obtains a 
subset of odd (resp. even) cardinality if $i_k = j_1$ (resp. $i_k<j_1$). Moreover, if 
$\p=\n_\g(\ah)$, then it follows from Theorem~\ref{thm:old-normalise} that
$S=S(\p)=\{\ap_{i_1},\ap_{i_2}, \dots,\ap_{i_k},\ap_{j_1}, \dots ,\ap_{j_k}\}$, modulo the possible coincidence of $i_k$ and $j_1$.

Suppose that $\p\in \mathfrak{Par}(\slno)$ and $\# S$ is odd,
$S\sim \{t_1,t_2,\dots, t_{2k-1}\}\subset [n]$, with $t_1< \dots <t_{2k-1}$. Then $\hot(\p)=2k-1$ and the minimal roots of
$\g({\ge} k; S)$ are in a bijection with the shortest intervals of $[n]$ that contain $k$ elements of $S$.
Therefore, these minimal roots are
\begin{gather*}
\gamma_1=\ap_{t_1}+\ap_{t_2}+\dots +\ap_{t_k} , \quad
\gamma_2=\ap_{t_2}+\ap_{t_3}+\dots +\ap_{t_{k+1}} , \\
\quad \dots, \quad
\gamma_k=\ap_{t_k}+\ap_{t_{k+1}}+\dots +\ap_{t_{2k-1}} ,
\end{gather*}
and it is immediate that, for the abelian ideal $\ah=f_2(\p)$ generated by 
$\gamma_1,\dots,\gamma_k$, we have $f_1(\ah)=\p$.

If $\# S$ is even,
$S\sim \{t_1,t_2,\dots, t_{2k}\}\subset [n]$, then $\hot(\p)=2k$ and the minimal roots of
$\g({\ge} k+1; S)$ are
\begin{gather*}
\gamma_1=\ap_{t_1}+\ap_{t_2}+\dots +\ap_{t_{k+1}} , \quad
\gamma_2=\ap_{t_2}+\ap_{t_3}+\dots +\ap_{t_{k+2}} , \\
\quad \dots, \quad
\gamma_k=\ap_{t_k}+\ap_{t_{k+1}}+\dots +\ap_{t_{2k}} .
\end{gather*}
Here again one obtains $\ah=f_2(\p)$ such that $f_1(\ah)=\p$.

We omit the part related to $\GR{C}{n}$, since it goes along the same lines, using the explicit 
description of $f_1$ given in \cite[Theorem\,3.3]{pr}. The point is that the unfolding  
$\GR{C}{n} \leadsto \GR{A}{2n-1}$ (see picture below) yields the identification of $\Ab(\spn)$ and $\mathfrak{Par}(\spn)$ with the
symmetric (with respect to the middle) subsets of $[2n-1]$, and one can use a symmetrised version of the previous argument.

\begin{figure}[htb]
\begin{picture}(300,35)(20,-5)
\multiput(30,12)(20,0){2}{\circle{6}}
\multiput(110,12)(20,0){2}{\circle{6}}
\multiput(112.5,11)(0,2){2}{\line(1,0){15}}
\multiput(33,12)(20,0){2}{\line(1,0){14}}
\put(93,12){\line(1,0){14}}
\put(74,9){$\cdots$}
\put(115,9){$<$} 
\put(27,22){$_1$}
\put(47,22){$_2$}
\put(127,22){$_n$}
\put(144,9){$\leadsto$}

\multiput(170,1)(20,0){2}{\circle{6}}
\multiput(173,1)(20,0){2}{\line(1,0){14}}
\put(214,-2){$\cdots$}
\multiput(170,23)(20,0){2}{\circle{6}}
\multiput(173,23)(20,0){2}{\line(1,0){14}}
\put(214,20){$\cdots$}
\multiput(250,1)(0,22){2}{\circle{6}}
\multiput(233,1)(0,22){2}{\line(1,0){14}}
\put(270,12){\circle{6}}
\put(252.5,2){\line(2,1){14.2}}
\put(252.5,22){\line(2,-1){14.2}}
\put(167,32){$_1$}
\put(187,32){$_2$}
\put(275,12){$_{n}$}
\put(161,-8){$_{2n{-}1}$}
\end{picture}
\end{figure}

(ii) 
Our goal is to produce two different subsets $S_1,S_2\subset \Pi$ such that $\p(S_1)$ and
$\p(S_2)$ give rise to the same abelian ideal. Below we use Theorem~\ref{thm:tap-explicit} and its 
proof. 
\\ \indent
As usual, $\ap_\theta$ is the only simple root that is not orthogonal to $\theta$.  
Let $S_1$ be the set of all simple roots adjacent to $\ap_\theta$ and $S_2=S_1\cup\{\ap_\theta\}$. Then
$\p(S_1)=\p[\ap_\theta]$, $\hot(\p[\ap_\theta])=3$, and $\ah(\ap_\theta)=\g({\ge}2; S_1)$.  Since $n_{\ap_\theta}=2$, we have
$\hot(\p(S_2))=2+\hot(\p[\ap_\theta])=5$ and $\g({\ge}3; S_2)$ is an abelian ideal. The proof of Theorem~\ref{thm:tap-explicit} shows that if $\nu_i\in \min(I(\ap_\theta))$, then $[\nu_i:\ap_\theta]=1$ and 
$\sum_{\beta\in S_1} [\nu:\beta]=2$. Hence $\g_{\nu_i}\in \g(3; S_2)$ and $\ah(\ap_\theta) \subset
\g({\ge}3; S_2)$. As $\ah(\ap_\theta)$ is maximal abelian, one has the equality and therefore
$f_2(\p(S_1))=f_2(\p(S_2))$.
\end{proof}

\begin{rmk}[Some speculations]     \label{rmk:might-be}
Set $\cF=f_1\circ f_2$ and $\tilde\cF=f_2\circ f_1$.
We say that $\ah\in\Ab(\g)$ is {\it reflexive}, if $\tilde\cF(\ah)=\ah$; likewise,  
$\p\in\mathfrak{Par}(\g)$ is {\it reflexive}, if $\cF(\p)=\p$. It is easily seen that $\cF(\p)\supset\p$ for all $\p$, 
while it can happen that $\tilde\cF(\ah)\not\supset\ah$
for some $\ah$ (e.g. if $\g=\GR{E}{6}$). 
\\ \indent
For $\slno$ and $\spn$, all abelian ideals 
are reflexive, whereas this is certainly not the case for the other simple types. However, 
Theorem~\ref{thm:udivit1} implies that the ideals $\ah(\tap)_{min}$ and $\ah(\tap)_{max}$ 
($\tap\in\Pi_l$) are always reflexive. It might be interesting to explicitly determine all reflexive abelian ideals. 
\\ \indent
Our calculations with $\g$ up to rank $4$ suggest that it also might be true that (the restrictions of) $f_1$ 
and $f_2$ induce the mutually inverse bijections between $\Ima(\tilde\cF)\subset \Ab(\g)$ and 
$\Ima(\cF)\subset \mathfrak{Par}(\g)$; in particular, $\#\Ima(\cF)=\#\Ima(\tilde\cF)$. But the equality
$\#\Ima(f_1)=\#\Ima(f_2)$ is false in general (e.g. for $\g=\mathfrak{so}_9$).
\\ \indent
We also conjecture that $\Ima(\cF)=\{\p\mid \cF(\p)=\p\}$ and $\Ima(\tilde\cF)=\{\ah\mid \tilde\cF(\ah)=\ah\}$; 
in other words, $\cF^2=\cF$ and $\tilde\cF^2=\tilde\cF$ in the rings of endomorphisms of the finite sets 
$\mathfrak{Par}(\g)$ and $\Ab(\g)$, respectively.

\end{rmk}

\end{document}